\documentclass[12pt]{article}
\synctex=1
\usepackage{amsmath,amssymb,mathtools}
\usepackage{amsthm}
\usepackage{amscd}
\usepackage{epsfig}
\usepackage{graphics}
\usepackage{graphicx}
\usepackage{colordvi}
\usepackage{enumitem}
\usepackage{mathrsfs} 
\usepackage[colorlinks]{hyperref}
\usepackage{dsfont}
\usepackage{mparhack}
\usepackage{xcolor,lipsum}
\usepackage[displaymath, mathlines]{lineno}
\modulolinenumbers[5]
\nolinenumbers

\hypersetup{
breaklinks,
colorlinks,
linkcolor=blue!75!black,
urlcolor=blue!75!black,
citecolor=blue!75!black}

\theoremstyle{plain}

\theoremstyle{definition}

\theoremstyle{remark}

\numberwithin{equation}{section}

\newcommand{\1}{\mathds 1}

\newcommand{\C}{{\mathscr C}}
\newcommand{\F}{\mathscr F}

\newcommand{\R}{{\mathbb{R}}}

\newcommand{\scrP}{{\mathscr P}}

\newcommand{\vep}{{\varepsilon}}

\renewcommand{\to}{\longrightarrow}

\newcommand{\lmt}{\longmapsto}

\newcommand{\ind}{{\perp\!\!\!\perp}}

\newcommand{\ms}{\mathscr}
\renewcommand{\P}{{\mathbb P}}

\newcommand{\E}{{\mathbb  E}}
\newcommand{\bfP}{{\P}}

\newcommand{\En}{\mathbb{E}^{(n)}}

\newcommand{\hphi}{{\phi\circ\mathbf m}}

\newcommand{\pid}{\pi_{\rm diag}}

\newcommand{\FV}{{\sf FV}}

\newcommand{\xin}{\xi^{(n)}}

\newcommand{\convdn}{\xrightarrow[n\to\infty]{{\rm (d)}}}

\newcommand{\pinmax}{\pi^{(n)}_{\rm max}}

\newcommand{\tmixn}{\mathbf t_{\rm mix}^{(n)}}

\newcommand{\tmeetn}{\gamma_n}

\newcommand{\To}{\xrightarrow[n\to\infty]{{\rm (d)}}}

\newcommand{\VM}{{\sf VM}}

\topmargin -0.5in \textwidth 6in \textheight 9in
\oddsidemargin
.3in \evensidemargin 0.1in\marginparwidth 0.4in

\begin{document}

\newtheoremstyle{slantthm}{10pt}{10pt}{\slshape}{}{\bfseries}{}{.5em}{\thmname{#1}\thmnumber{ #2}\thmnote{ (#3)}.}
\newtheoremstyle{slantthmp}{10pt}{10pt}{\slshape}{}{\bfseries}{}{.5em}{\thmname{#1}\thmnumber{ #2}\thmnote{ (#3)}.}
\newtheoremstyle{slantrmk}{10pt}{10pt}{\rmfamily}{}{\bfseries}{}{.5em}{\thmname{#1}\thmnumber{ #2}\thmnote{ (#3)}.}

\theoremstyle{slantthm}
\newtheorem{thm}{Theorem}[section]
\newtheorem{prop}[thm]{Proposition}
\newtheorem{lem}[thm]{Lemma}
\newtheorem{cor}[thm]{Corollary}
\newtheorem{defi}[thm]{Definition}
\newtheorem{prob}[thm]{Problem}
\newtheorem{disc}[thm]{Discussion}
\newtheorem*{nota}{Notation}
\newtheorem*{conj}{Conjecture}
\theoremstyle{slantrmk}
\newtheorem{rmk}[thm]{Remark}
\newtheorem{eg}[thm]{Example}
\newtheorem{step}{Step}
\newtheorem{claim}{Claim}
\theoremstyle{plain}
\newtheorem{thmm}{Theorem}[section]

\title{{\bf Weak atomic convergence of finite voter models toward Fleming-Viot processes}}

\author{Yu-Ting Chen\footnote{Department of Mathematics, University of Tennessee, Knoxville, TN, US}~ and J. Theodore Cox\footnote{Department of Mathematics, Syracuse University, Syracuse, NY, US}}

\maketitle

\begin{abstract}
We consider the empirical measures of multi-type voter
models with mutation on large finite sets, and prove their weak
atomic convergence in the sense of Ethier and Kurtz (1994)  toward a Fleming-Viot process.
Convergence in the weak atomic topology is
strong enough to answer a line of inquiry raised by Aldous (2013)
 concerning the distributions of the 
 corresponding entropy processes and diversity processes for types.
\end{abstract}

\tableofcontents

\section{Introduction}\label{sec:intro}
Voter models are a generalization of Moran processes \cite{Moran_1958} from population genetics  in the presence of spatial structure, and have been one of the major subjects in interacting particle systems  \cite{L:IPS} along with their variants in models of cancer, social dynamics, and probabilistic evolutionary games. See, for example, \cite{Aldous_FMIE, Bramson_1980, Bramson_1981, Chen_2013, CDP_13}, and the references there for origins of these models. 
The present paper is a continuation of Chen, Choi and
Cox~\cite{CCC}
which considers the weak convergence of two-type voter models toward the Wright-Fisher diffusion.  
Our main goal here is to show that with respect to the weak
atomic convergence introduced by Ethier and Kurtz
\cite{EK:AT}, which is finer than the usual weak
convergence, infinite-type voter models on large finite sets
in the presence of mutation converge to a Fleming-Viot process. Fleming-Viot processes are one
of the most well studied classes of measure-valued processes, in part
due to its duality with the Kingman coalescent (see
\cite{Birkner_2009, Donnelly_1996, Donnelly_1999, EK:93} and
many others).

For the voter models considered throughout this paper
we allow
multiple types and mutation. The models are defined as
follows. With respect to a finite set $E$ with size
$\#E=N\geq 2$ and  a compact metric
type space $S$, a multi-type voter model is defined by a ``voting mechanism'' according to an irreducible transition kernel $q$ on $E$ with zero trace $q(x,x)\equiv 0$,
and incorporates independent mutation according to a finite
measure $\mu$ on $S$. The kernel
$q$ plays the role of a ``voting kernel'' in that at rate 1, the type at each $x\in E$
is replaced by the type at $y$ with probability $q(x,y)$. On
the other hand, if $\mu$ is nonzero, mutation at each $x\in E $ occurs independently with rate $\mu(\1)$, and a new type is chosen according to $\mu(\,\cdot\,)/\mu(\1)$. 
The canonical examples for these voter models are defined by
voting kernels which are random walk transition probabilities on finite, connected, edge-weighted graphs without self-loops. Here and in what follows, see \cite{Biggs_1974} for terminology in graph theory.

Voter models where voting kernels are random walk transition probabilities on complete graphs reduce to the classical Moran processes. This particular case forms the basis of several important probability models. For example, in the limit of large $N$, frequencies of a fixed type under two-type Moran processes converge to the Wright-Fisher diffusion. 
Furthermore, in the multi-type case, the empirical
   measures of the corresponding Moran processes converge to a Fleming-Viot process,
   which is a measure-valued 
   infinite dimensional generalization of the Wright-Fisher
   diffusion (cf. \cite[Chapter~10]{EK:MP}, and also
Section~\ref{sec:coal} below).
Along these lines, one of the major interests has been in the rich properties of the continuum limits, whereas the mean-field nature of Moran processes allows complete characterizations of those dynamical equations on their own.

The objects of this paper are the empirical measures of voter models on large finite spatial structures. The empirical measure associated with an $(E,q,\mu)$-voter model $(\xi_t)$ is given by the process $\big(\mathbf m(\xi_t)\big)$ taking values in the space $\ms P(S)$ of probability measures on $S$. Here, the probability-measure-valued functional $\mathbf m$ is defined by
\begin{equation}\label{eq:empmeas}
\mathbf m(\xi)\doteq \sum_{x\in E}\pi(x)\delta_{\xi(x)},\quad \xi\in S^E,
\end{equation}
where the weight function $\pi$ is  the unique stationary distribution associated with the voting kernel $q$. Notice that the mass of $\mathbf m(\xi)$ at $\sigma$, for $\sigma\in S$, gives the ($\pi$-weighted) density of type $\sigma$ in $\xi$. 

The first main result of this paper,
  Theorem~\ref{thmm:1} below, generalizes the classical result of convergence of multi-type Moran processes to a Fleming-Viot process, 
  and is an infinite-dimensional generalization of \cite[Theorem~2.2]{CCC} where two-type voter models without mutation are
investigated. With respect to an appropriate sequence of
$(E_n,q^{(n)},\mu_n)$-voter models $(\xi^{(n)}_t)$ where $\#
E_n$ increases to infinity,
we establish the weak convergence 
\begin{align}\label{mN}
\mathbf m\big(\xi^{(n)}_{\gamma_n \cdot}\big) \xrightarrow[n\to\infty]{} X
\end{align}
as probability-measure-valued processes on the type space $S$ (with respect to Skorokhod's $J_1$-topology), where
$X$ is a Fleming-Viot process.  Here, the
time scales $\gamma_n$  are growing constants given by the
expected first meeting times of two independent Markov
chains which are driven by the corresponding voting kernels
and are started from stationarity, and
$\ms P(S)$ is equipped with the usual topology of weak
convergence. The major conditions for the weak convergence in (\ref{mN}) are certain
simple, mild mixing conditions on the
kernels $q^{(n)}$ expressed in terms of total variation mixing times,
or spectral gaps when the kernels are
reversible; we also require ``weak mutation'' so that $\gamma_n\mu_n$
converges weakly. Either of our
two mixing conditions (see Theorem~\ref{thmm:1}(iv)) guarantees that two independent~$q^{(n)}$-Markov
chains get appropriately close
to stationarity before they meet. They are particular
formalizations of the general principle appearing as early
as in Keilson
\cite{Keilson_1979} and Aldous \cite{Aldous_1982} to obtain
the convergence of rescaled hitting times toward exponential
variables. See Oliveira~\cite{Oliveira_2012, Oliveira_2013}
for a closely related application of this
  principle to absorption of voter models without mutation, and Section~\ref{sec:atom} for an application of Oliveira's condition to weak atomic convergence of voter models (to be discussed in more detail below).

The proof
of (\ref{mN}) makes
  use of the standard duality between voter models and
coalescing Markov chains driven by voting kernels (see
Section~\ref{sec:coal}). The mixing conditions mentioned above applied to the dual processes lead to the mean-field-like behavior for  dynamics of the empirical measures of voter models through duality. In this direction, our application of duality will
be kept at a minimum in order to obtain mild conditions on voting kernels for the weak convergence. In more detail, the  well-posedness of the martingale problem satisfied by the limiting Fleming-Viot process only requires us to characterize
the limits of the first two moments of the empirical measures (against appropriate test functions). On the other hand, the use of the empirical measures in (\ref{eq:empmeas}) means that 
we consider a direction different from the method of
stochastic PDEs for rescaled (two-type) voter models on integer
lattices by Mueller and Tribe~\cite{MT} and Cox, Durrett and Perkins~\cite{CDP}. With respect to a fixed type $\sigma$, the stochastic PDE method considers certain rescaled limits of the discrete-measure-valued process $\sum_x \1_{\{\sigma\}}\big(\xi_t(x)\big)\delta_x$ for a voter model $(\xi_t)$ defined on an integer lattice, where $x$ ranges over all vertices. 
In contrast to the context in this paper,
the special geometry of integer lattices is required for this method. Nonetheless, as far as a fixed type is concerned, the method can give a much more detailed description
of the space-time dynamics of type $\sigma$ under the voter model, in terms of  processes with ``cleaner'' characterizations.

Up to this point, our program for the weak convergence of
 empirical measures for multi-type voter models is similar to that in
\cite{CCC} for the weak convergence of density processes for two-type voter models without mutation toward the Wright-Fisher diffusion. However,  the multi-type
  character of the voter models under consideration and the presence of mutation
now bring new technical
  issues. In particular, the latter changes the delicate duality relation between the voter models and the coalescing Markov chains.
The first part of this paper will be devoted to the resolutions of these issues. See Section~\ref{sec:emvm} and Section~\ref{sec:coal} for details.

The second main result of this paper strengthens the mode of
the convergence in \eqref{mN}. Our motivation in this direction is a line of inquiry by Aldous~\cite{Aldous_FMIE} on ``finite Markov information exchange models''. 
The context there considers social
dynamics for a large population of agents which are located
at
the vertices of finite, connected,
weighted graphs without self-loops. Their ``opinions''
evolve according to voter model dynamics $(\xi_t)$ without
mutation. Different opinions are held by the agents in
the initial states so that the number of effective types in the
system grows with the size of state space.

The inquiry in \cite[Section 5.6]{Aldous_FMIE} concerns the distributions of atoms of empirical measures, which correspond to the proportions of agents with different opinions. As noted in \cite{Aldous_FMIE} in the study of clusters of opinions, the entropy
process ${\sf Ent}\big({\bf m}(\xi_t)\big)$
and  diversity process ${\sf Div}\big({\bf m}(\xi_t)\big)$ associated with a voter model $(\xi_t)$ are of particular interest. Here for $\lambda\in
\ms P(S)$, $\mathfrak a(\lambda)=(\mathfrak
a_1(\lambda),\mathfrak a_2(\lambda),\dots)$ denotes the
sequence of the atom sizes of $\lambda$ arranged in
decreasing order: $\mathfrak a_1(\lambda)\geq \mathfrak
a_2(\lambda)\geq \cdots$, and the entropy  and diversity of $\lambda$ are defined by
\begin{linenomath}
\begin{align}
\text{\sf Ent}(\lambda) \doteq -& \sum_{i=1}^\infty \mathfrak a_i(\lambda)\log\big(\mathfrak a_i(\lambda)\big),\label{def:ent}\\
\label{def:div}
{\sf Div}(\lambda)\doteq &\sum_{i=1}^\infty \mathfrak a_i(\lambda)^2,
\end{align}
\end{linenomath}
respectively, with the convention that $\mathfrak a_i(\lambda)=0$ if the number of atoms of $\lambda$ is less than $i$ and $0\log 0=0$. Notice that entropy emphasizes small atoms, while diversity emphasizes large atoms.

Given \eqref{eq:empmeas}, one may expect that in the
  limit of   large $N$, the distributions of the 
entropy processes and diversity processes for voter models should be
  well approximated by the same quantities for the limiting
  Fleming-Viot process.
The main difficulty in
  obtaining a result along these lines is that, in
  general, weak convergence of a sequence of probability measures
does not imply weak
convergence of the sizes and locations of their
atoms. In particular, even though the Fleming-Viot process
without mutation is almost surely purely atomic at positive times
(cf. \cite[Theorem~4.5]{EK:MP} or \cite[Theorem~4.1]{Shiga_90}), (\ref{mN}) suggests but
does not imply convergence of the sizes and locations of atoms. On the other hand,
Ethier and Kurtz~\cite{EK:AT} found a way
  around this difficulty
two decades ago by introducing the finer mode of weak atomic
convergence. It reinforces the usual weak
convergence of measures to the effect that sizes and
locations of atoms converge in the natural way (see Section~\ref{sec:atom} or \cite[Section~2]{EK:AT} for
further discussions, and also \cite[Section~4]{EK:AT} for its genetic applications to Moran processes). In Theorem~\ref{thmm:2} below
we change the weak topology on $\ms P(S)$ to this finer topology of weak atomic convergence, and establish the weak convergence in (\ref{mN}) 
(again as processes taking values in $\ms P(S)$) with
  respect to this topology under an additional
  mild condition on the mutation measures. 

Finally, in the setting of \cite{Aldous_FMIE} in which 
mutation is absent, using the weak atomic convergence above and
duality again, we obtain convergence of the entropy processes and 
diversity processes under an additional condition on the
systems of coalescing $q^{(n)}$-Markov chains
(see
Theorem~\ref{thmm:3}). The condition states that in the
limit of large $E_n$ and with time being rescaled 
as before, the first time that the effective size of the full system of coalescing Markov chains (starting from all spatial points) reduces to a fixed number converges in distribution. The limit is given by the same limit as in the mean-field case, and can be characterized as a certain convergent series of independent exponential variables with simple parameters which has a natural interpretation that in the limit, the effective size of the coalescing system ``comes down from infinity''. This property
 holds for Moran processes through their
duality with the Kingman coalescent \cite{Kingman_1982},
and also for certain coalescing random walks on discrete tori by Cox \cite{Cox_1989}.
Its general validity under
certain mild conditions on the kernels $q^{(n)}$
was shown only recently in a
remarkable result in \cite{Oliveira_2013} (see
Section~\ref{sec:atom} for more details).  Under this
assumption on the dual 
coalescing Markov chains, the numbers of atoms at fixed positive
times under the corresponding time-rescaled voter models are bounded in probability in the limit
(Theorem~\ref{thmm:3}). The weak convergence of  the entropy processes and diversity processes is then a simple consequence of the convergence in (\ref{mN}) reinforced to the weak atomic
convergence.

In this way,
we provide an answer to the inquiry in \cite{Aldous_FMIE} in
terms of the well-established theory of Fleming-Viot
processes and via the notion of weak atomic convergence of measure-valued processes in \cite{EK:AT}.

\paragraph{\bf Organization of the paper} In
Section~\ref{sec:emvm}, we study the probability-measure-valued
functional $\mathbf m$ in (\ref{eq:empmeas}) under
generators of finite voter models. We work with two
different classes of test functions, one for
the weak convergence of the  empirical measures for voter models
and the other for their weak atomic convergence. In
Section~\ref{sec:coal}, we characterize the Fleming-Viot
processes, and proceed to some preliminary estimates for the weak convergence of  empirical measures associated with voter models
to a Fleming-Viot process after an outline of our  method. The proof of the weak convergence of empirical measures in (\ref{mN}) is then presented in
Section~\ref{sec:limit}, and the result can be found in
Theorem~\ref{thmm:1}. In Section~\ref{sec:atom}, we first
discuss briefly the Ethier-Kurtz weak atomic convergence introduced in \cite{EK:AT} and then
prove Theorem~\ref{thmm:2}. We further reinforce Theorem~\ref{thmm:2} in Theorem~\ref{thmm:3},
obtaining convergence of atom-size
point processes for voter models in the absence of mutation. Characterizations of the
limiting processes for the entropy processes and the
diversity processes then follow. Finally, we close this paper
with Section~\ref{sec:dual}. There, for the convenience of the reader, we give a brief self-contained
treatment of duality which allows us to prove the key
  probability estimate \eqref{f:bdd} in
  Section~\ref{sec:coal}.

\section{Dynamics of empirical measures for voter models}\label{sec:emvm} 
In this section, we study empirical measures under the generator of an $(E,q,\mu)$-voter model. The pair $(E,q)$ is an irreducible transition kernel with stationary distribution $\pi$
and satisfies the zero trace condition $q(x,x)\equiv 0$.
The mutation measure $\mu$ is a finite measure on $S$. These assumptions on voting kernels and mutation measures will be in force throughout the rest of this paper. 

By our description of the $(E,q,\mu)$-voter model in Section~\ref{sec:intro}, the generator of the voter model is given by
\begin{linenomath}
\begin{align}\label{eq:vmgen}
&L^{{\mu}}_{\sf VM} F(\xi) \doteq 
\sum_{x,y\in E}q(x,y)[F(\xi^{x,y})-F(\xi)]+\sum_{x\in E}\int_S[F(\xi^{x|\sigma})-F(\xi)]d\mu(\sigma),
\end{align}
\end{linenomath}
where $F: S^E\to  \R$, the configuration $\xi^{x,y}$ in the first sum of (\ref{eq:vmgen}) is given by
\begin{equation}\label{eq:xiab}
\xi^{x,y}(a) = \begin{cases}
\xi(y)&\text{if }a=x,\\
\xi(a)&\text{if }a\ne x ,
\end{cases}
\end{equation}
and the configuration $\xi^{a|\sigma}$ in the second sum of
(\ref{eq:vmgen}) is obtained by replacing the type of $\xi$
at $a$ with $\sigma$. Notice that the first sum in
(\ref{eq:vmgen}) governs the voting mechanism of the voter
model, while the second sum governs its mutation
mechanism. Since the voting mechanism is irrelevant to the
mutation mechanism, it will become convenient to handle the
two sums separately and henceforth we write
$L^\mu_{\sf VM}= L_{\sf VM}+ L_\mu $, where
\[
L_{\sf VM}=L^0_{\sf VM} 
\]
and
\begin{linenomath}
\begin{align}\label{def:Lmu}
L_\mu F(\xi)=\sum_{x\in E}\int_S[F(\xi^{x|\sigma})-F(\xi)]d\mu(\sigma).
\end{align}
\end{linenomath}

We consider two classes of test functions for the empirical measures of voter models. They will be used separately to characterize limits with respect to weak convergence or weak atomic convergence of empirical measures of voter models.
To define these test functions, first we write
\begin{linenomath}
\begin{align}\label{def:integration}
\lambda(f) = \langle f,\lambda\rangle \doteq \int_{S^k} f(\sigma_1,\cdots,\sigma_k)d\lambda(\sigma_1,\cdots,\sigma_k) ,
\end{align}
\end{linenomath}
for $\lambda\in\scrP(S^k)$, $f:S^k\to \mathbb R$ and $k\in \Bbb N$,
where the reference to $k$ for $\lambda(f)$ and $\langle f,\lambda\rangle$ will remain implicit but should be clear from the context.

The first class of test functions consists of functions $\phi$ of the form:
\begin{equation}\label{def:phik}
\phi(\lambda) = 
\prod_{i=1}^k\langle f_i,\lambda\rangle
\end{equation}
for continuous functions $f_i:S\to \Bbb R$ and $k\in \Bbb N$. We set
$\Phi_k$ to be  the set of functions on $\ms P(S)$  taking the form (\ref{def:phik})
and $\Phi=\bigcup_{k\in \Bbb N}\Phi_k$. To facilitate the following computations, we also set, for a function $\phi$ as in (\ref{def:phik}),
\begin{linenomath}
\begin{align}\label{eq:restrict}
\phi_A(\lambda) \doteq  \prod_{i\in A}\langle f_i,\lambda
\rangle\quad\mbox{and}\quad
\Delta_A^\phi(\sigma,\tau) \doteq   \prod_{i\in A}[f_i(\sigma)-
f_i(\tau)]
\end{align}
\end{linenomath}
for all $A\subseteq \{1,\cdots,k\}$, $\lambda\in \ms P(S)$ and $\sigma,\tau\in S$, with the convention that the products are identically equal to $1$ if $A$ is empty. Notice that $\Delta^\phi_A(\sigma,\tau)\neq \Delta^\phi_A(\tau,\sigma)$ in general. 

The introduction of the test functions in (\ref{def:phik}) is to facilitate the study of weak convergence of the empirical measures for voter models by the method of moments. If $\phi$ is as in (\ref{def:phik}) and $f_i$'s are indicator functions of single types, then $\phi\big(\mathbf m(\xi)\big)$ reduces to a product of moments of densities for (possibly) different types in $\xi$ (recall the notation $\mathbf m(\xi)$ in (\ref{eq:empmeas})). 
This fact should make evident our consideration of the following proposition, which characterizes dynamics of empirical measures for voter models. See also Section~\ref{sec:coal} for the characterization of Fleming-Viot processes by these test functions.

\begin{prop}\label{lem:VMgenphi} 
For every $k\in \Bbb N$ and $\phi\in \Phi_k$ taking the form (\ref{def:phik}), we have
\begin{linenomath}
\begin{align}
\begin{split}
&L_{\sf VM}\phi \circ \mathbf m(\xi) 
\\
&\hspace{1cm}= \sum_{\substack{A:A\subseteq\{1,\dots,k\}\\ |A|\ge 2} }
\phi_{A^\complement}\circ \mathbf m(\xi)  \sum_{x,y\in E}\pi(x)^{|A|}
q(x,y)\Delta^\phi_A\big(\xi(y),\xi(x)\big),\label{eq:vmgenphi1-1}\\
\end{split}\\
\begin{split}
&L_{\mu}\phi\circ \mathbf m(\xi) \\
&\hspace{1cm}=\sum_{\substack{A:A\subseteq\{1,\dots,k\}\\ |A|\ge 1} }\phi_{A^\complement} \circ\mathbf m(\xi)
\sum_{x\in E}\pi(x)^{|A|}\int_S\Delta_A^\phi\big(\sigma,\xi(x)\big)d\mu(\sigma)\label{eq:vmgenphi1-2}
\end{split}
\end{align}
\end{linenomath}
with the convention that a sum over an empty set is zero, where $\mathbf m$ is defined by (\ref{eq:empmeas}) and $\phi\circ \mathbf m(\xi)=\phi\big(\mathbf m(\xi)\big)$.
\end{prop}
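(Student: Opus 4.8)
The plan is to compute $L_{\sf VM}\phi\circ\mathbf m(\xi)$ and $L_\mu\phi\circ\mathbf m(\xi)$ directly from the definitions \eqref{eq:vmgen}--\eqref{def:Lmu}, using the product structure of $\phi$ in \eqref{def:phik} and a telescoping identity for products. The starting observation is that for $\xi\in S^E$ and any $y\in E$, one has $\langle f_i,\mathbf m(\xi^{x,y})\rangle = \langle f_i,\mathbf m(\xi)\rangle + \pi(x)\big[f_i(\xi(y))-f_i(\xi(x))\big]$, since passing from $\xi$ to $\xi^{x,y}$ only changes the type at $x$, whose $\pi$-weight is $\pi(x)$. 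Abbreviating $a_i\doteq \langle f_i,\mathbf m(\xi)\rangle$ and $b_i\doteq \pi(x)[f_i(\xi(y))-f_i(\xi(x))]$, we get $\phi\circ\mathbf m(\xi^{x,y})-\phi\circ\mathbf m(\xi)=\prod_{i=1}^k(a_i+b_i)-\prod_{i=1}^k a_i$. The standard expansion $\prod(a_i+b_i)-\prod a_i=\sum_{\emptyset\ne A\subseteq\{1,\dots,k\}}\big(\prod_{i\in A}b_i\big)\big(\prod_{i\notin A}a_i\big)$ then yields, after substituting back and recognizing $\prod_{i\in A}b_i=\pi(x)^{|A|}\Delta_A^\phi(\xi(y),\xi(x))$ and $\prod_{i\notin A}a_i=\phi_{A^\complement}\circ\mathbf m(\xi)$,
\[
F(\xi^{x,y})-F(\xi)=\sum_{\emptyset\ne A\subseteq\{1,\dots,k\}}\phi_{A^\complement}\circ\mathbf m(\xi)\,\pi(x)^{|A|}\Delta_A^\phi\big(\xi(y),\xi(x)\big),\qquad F\doteq\phi\circ\mathbf m.
\]
Multiplying by $q(x,y)$ and summing over $x,y$ gives \eqref{eq:vmgenphi1-1} except that the sum is over all nonempty $A$ rather than $|A|\ge 2$. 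The singleton terms $A=\{i\}$ contribute $f_i(\xi(y))-f_i(\xi(x))$, and summing $\sum_{x,y}\pi(x)q(x,y)\big[f_i(\xi(y))-f_i(\xi(x))\big]$ vanishes: the $f_i(\xi(x))$ piece is $\sum_x\pi(x)f_i(\xi(x))$ (using $\sum_y q(x,y)=1$), while the $f_i(\xi(y))$ piece is $\sum_y\big(\sum_x\pi(x)q(x,y)\big)f_i(\xi(y))=\sum_y\pi(y)f_i(\xi(y))$ by stationarity of $\pi$, so the two cancel. Hence only $|A|\ge 2$ survives, which is exactly \eqref{eq:vmgenphi1-1}.

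For \eqref{eq:vmgenphi1-2} the computation is parallel but simpler: replacing $\xi$ by $\xi^{x|\sigma}$ changes only the type at $x$, so $\langle f_i,\mathbf m(\xi^{x|\sigma})\rangle=\langle f_i,\mathbf m(\xi)\rangle+\pi(x)[f_i(\sigma)-f_i(\xi(x))]$, and the same telescoping expansion gives
\[
F(\xi^{x|\sigma})-F(\xi)=\sum_{\emptyset\ne A\subseteq\{1,\dots,k\}}\phi_{A^\complement}\circ\mathbf m(\xi)\,\pi(x)^{|A|}\Delta_A^\phi\big(\sigma,\xi(x)\big).
\]
Integrating against $d\mu(\sigma)$ and summing over $x\in E$ yields \eqref{eq:vmgenphi1-2} directly; here there is no cancellation of the singleton terms, because the mutation source $\sigma$ is not coupled to the weight $\pi$ the way the voting source is, so the sum genuinely runs over all $A$ with $|A|\ge 1$.

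I do not anticipate a serious obstacle — this is essentially a bookkeeping argument. The one point that needs care is the cancellation of the $|A|=1$ terms in the voting part: it is exactly here that the choice of weights $\pi(x)$ in the definition \eqref{eq:empmeas} of $\mathbf m$ is used, via the identity $\sum_x\pi(x)q(x,y)=\pi(y)$. It is worth recording explicitly that this is the only place stationarity of $\pi$ enters, and that absolute convergence / finiteness of all sums is automatic since $E$ is finite, $\mu$ is a finite measure, and the $f_i$ are continuous hence bounded on the compact space $S$, so Fubini for the $\mu$-integral in the second identity is unproblematic.
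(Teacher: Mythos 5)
Your proposal is correct and follows essentially the same route as the paper: expand $\langle f_i,\mathbf m(\xi^{x,y})\rangle-\langle f_i,\mathbf m(\xi)\rangle=\pi(x)[f_i\circ\xi(y)-f_i\circ\xi(x)]$ through the product, and kill the singleton terms via $\pi q=\pi$ and $q\1=\1$, with the mutation identity obtained by the same expansion with $\sigma$ in place of $\xi(y)$ and integration against $\mu$. No gaps; the only substantive point (the $|A|=1$ cancellation in the voting part and its absence in the mutation part) is exactly the one you single out, matching the paper's argument.
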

\begin{proof}
We start with the proof of (\ref{eq:vmgenphi1-1}).
By the definition of $\xi^{x,y}$ in (\ref{eq:xiab}), we have, for all $f:S\to \R$,
\begin{linenomath}
\begin{align*}
\langle f,\mathbf m(\xi^{x,y})\rangle 
=\langle f,\mathbf m(\xi)\rangle +  \pi(x) [f\circ\xi(y) -
f\circ\xi(x)]
\end{align*}
\end{linenomath}
since $f\circ \xi^{x,y}$ and $f\circ \xi$ only differ at site $x$ and the difference is $f\circ \xi(y)-f\circ \xi(x)$. (Here, $f\circ \xi(x)\equiv f\big(\xi(x)\big)$.)  
It follows that
\begin{linenomath}
\begin{align}
&\phi \circ\mathbf m(\xi^{x,y})-\phi\circ \mathbf m(\xi) \notag\\
&=
\prod_{i=1}^k\Big(\langle f_i,\mathbf m(\xi) \rangle 
+ \pi(x)[f_i\circ\xi(y)-f_i\circ\xi(x)]\Big) -\phi \circ\mathbf m(\xi) 
\notag\\ & 
=
\sum_{A:A\subseteq\{1,\dots,k\}}
\prod_{i\in A^\complement}\langle f_i,\mathbf m(\xi) \rangle
\prod_{i\in A}\pi(x)[f_i\circ\xi(y)-f_i\circ\xi(x)]
  -\phi \circ\mathbf m(\xi)
\notag\\ &
= \sum_{\substack{A:A\subseteq\{1,\dots,k\}\\ |A|\ge 1} }
\phi_{A^\complement}\circ \mathbf m(\xi)\pi(x)^{|A|}\Delta_A^\phi\big(\xi(y),\xi(x)\big),\label{phim}
\end{align}
\end{linenomath}
where in the last equality we use the definition of $\phi$ and the notations in (\ref{eq:restrict}).
We deduce from the last equality and \eqref{eq:vmgen} that
\[
L_{\sf VM} \phi\circ \mathbf m(\xi) = 
\sum_{\substack{A:A\subseteq\{1,\dots,k\}\\ |A|\ge 1} }
\phi_{A^\complement}\circ \mathbf m(\xi) \sum_{x,y\in   E}\pi(x)^{|A|}q(x,y) 
\Delta^\phi_A\big(\xi(y),\xi(x)\big).
\]
In particular, all the summands on the right-hand side of (\ref{phim}) indexed by $A=\{i\}$ for $1\leq i\leq k$ vanish since
\begin{linenomath}
\begin{align*}
&\sum_{x,y\in E}\pi(x)^{|A|}q(x,y)\Delta^\phi_{A}\big(\xi(y),\xi(x)\big)\\
&\hspace{1cm}=
\sum_{x,y\in E}\pi(x)q(x,y)f_i\circ\xi(y)
- \sum_{x,y\in E}\pi(x)q(x,y)f_i\circ \xi(x)
=0,
\end{align*}
\end{linenomath}
where the last equality follows from the fact that $\pi q=\pi$ and $q\1\equiv \1$. The last two displays prove (\ref{eq:vmgenphi1-1}).

The proof of the required equality for $L_{\mu} \phi\circ \mathbf m(\xi)$ in (\ref{eq:vmgenphi1-2}) is similar. By the definition of $L_{\mu}$ in (\ref{def:Lmu}), we have
\begin{linenomath}
\begin{align*}
L_\mu \phi\circ \mathbf m(\xi)=&\sum_{x\in E}\int_S[\phi\circ\mathbf m(\xi^{x|\sigma})-\phi\circ\mathbf m(\xi)]d\mu(\sigma)\\
=&\sum_{x\in E}
\sum_{\substack{A:A\subseteq\{1,\dots,k\}\\ |A|\ge 1} }
\phi_{A^\complement} \circ\mathbf m(\xi)\pi(x)^{|A|}\int_S\Delta_A^\phi\big(\sigma,\xi(x)\big)d\mu(\sigma),
\end{align*}
\end{linenomath}
which follows from the same calculation in the display for (\ref{phim}) if one replaces $\xi(y)$ with $\sigma$ from the second line on in that display and then integrates with respect to $d\mu(\sigma)$ over $S$. The foregoing equality proves (\ref{eq:vmgenphi1-2}). 
\end{proof}

In the next section, we will begin our proof for the weak convergence of empirical measures for voter models toward Fleming-Viot processes stated in (\ref{mN}). As discussed in Section~\ref{sec:intro}, our method uses the well-posedness of the martingale problems for Fleming-Viot processes, which, roughly speaking, requires us to characterize only the first two moments of empirical measures for voter models. This direction will call for the following special cases of  Proposition~\ref{lem:VMgenphi}.

\begin{cor}\label{cor:gen12}
{\rm (1)} For $\phi$ as in (\ref{def:phik}) with $k=1$, we have
\begin{linenomath}
\begin{align}
L_{\VM}\phi\circ\mathbf m(\xi)=0\quad\mbox{and}\quad 
L_{\mu}\phi\circ\mathbf m(\xi)=\langle A_\mu f_1,\mathbf m(\xi)\rangle,
 \label{eq:vmgenphi2}
\end{align}
\end{linenomath}
where
\begin{linenomath}
\begin{align}\label{def:Amu}
A_\mu f(\tau)\doteq\int_S[f(\sigma)-f(\tau)]d\mu(\sigma).
\end{align}
\end{linenomath}

\noindent {\rm (2)} For $\phi$ as in (\ref{def:phik}) with $k=2$, we have
\begin{linenomath}
\begin{align}
L_{\sf VM} \phi \circ\mathbf m(\xi) 
=& \sum_{x,y\in E}\pi(x)^2q(x,y)
\Delta^\phi_{\{1,2\}}\big(\xi(y),\xi(x)\big),\label{eq:vmgenphi3-1}\\
\begin{split}
L_{\mu}\phi\circ\mathbf m(\xi)=&\sum_{i=1}^2 \langle A_\mu f_i,\mathbf m(\xi)\rangle\prod_{j:j\neq i}\langle f_j,\mathbf m(\xi)\rangle\\
&+\pid\langle f_1 f_2,\mu\rangle-\langle f_1,\mu\rangle \sum_{x\in E}f_2\circ \xi(x)\pi(x)^2\\
&-\sum_{x\in E}f_1\circ \xi(x)\pi(x)^2\langle f_2,\mu\rangle +\mu(\1)\sum_{x\in E}f_1f_2\circ \xi(x)\pi(x)^2,\label{eq:vmgenphi3-2}
\end{split}
\end{align}
\end{linenomath}
where $\pid=\sum_{x\in E}\pi(x)^2$ and $f\circ \xi(x)=f\big(\xi(x)\big)$.
\end{cor}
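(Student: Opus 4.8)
The plan is to derive both parts directly from Proposition~\ref{lem:VMgenphi} by specializing the value of $k$; no new idea is needed beyond collapsing the index sets in \eqref{eq:vmgenphi1-1}--\eqref{eq:vmgenphi1-2} and carrying out one short expansion.

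For part~(1) I would set $k=1$. Then in \eqref{eq:vmgenphi1-1} the index set $\{A\subseteq\{1\}:|A|\ge 2\}$ is empty, so the sum is void and $L_{\VM}\phi\circ\mathbf m(\xi)=0$. In \eqref{eq:vmgenphi1-2} the only admissible set is $A=\{1\}$, for which $\phi_{A^\complement}=\phi_\emptyset\equiv 1$ and $\Delta^\phi_{\{1\}}(\sigma,\tau)=f_1(\sigma)-f_1(\tau)$; since $\int_S[f_1(\sigma)-f_1(\xi(x))]\,d\mu(\sigma)=A_\mu f_1(\xi(x))$ by \eqref{def:Amu}, summing against $\pi(x)$ over $x\in E$ gives $\langle A_\mu f_1,\mathbf m(\xi)\rangle$, which is \eqref{eq:vmgenphi2}.

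For part~(2) I would set $k=2$. In \eqref{eq:vmgenphi1-1} the only set with $|A|\ge 2$ is $A=\{1,2\}$, and $\phi_\emptyset\equiv 1$, so \eqref{eq:vmgenphi3-1} is immediate. For the mutation term, \eqref{eq:vmgenphi1-2} splits into the contributions $A=\{1\}$, $A=\{2\}$, and $A=\{1,2\}$. The first two are handled exactly as in part~(1) and produce $\langle f_2,\mathbf m(\xi)\rangle\langle A_\mu f_1,\mathbf m(\xi)\rangle$ and $\langle f_1,\mathbf m(\xi)\rangle\langle A_\mu f_2,\mathbf m(\xi)\rangle$, i.e.\ the first line of \eqref{eq:vmgenphi3-2}. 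The remaining term is $\sum_{x\in E}\pi(x)^2\int_S[f_1(\sigma)-f_1(\xi(x))][f_2(\sigma)-f_2(\xi(x))]\,d\mu(\sigma)$; I would expand the product inside the integral into its four monomials and integrate each against $\mu$, using $\pid=\sum_{x\in E}\pi(x)^2$, to obtain precisely $\pid\langle f_1f_2,\mu\rangle$, $-\langle f_1,\mu\rangle\sum_x f_2\circ\xi(x)\pi(x)^2$, $-\sum_x f_1\circ\xi(x)\pi(x)^2\langle f_2,\mu\rangle$, and $\mu(\1)\sum_x f_1f_2\circ\xi(x)\pi(x)^2$, which are the second and third lines of \eqref{eq:vmgenphi3-2}.

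The whole argument is bookkeeping: the one place to be careful is the expansion of $[f_1(\sigma)-f_1(\xi(x))][f_2(\sigma)-f_2(\xi(x))]$, where one must keep straight which factors depend on the integration variable $\sigma$ and which on the fixed configuration value $\xi(x)$, and then distribute the $\pi(x)^2$ weights correctly when summing over $x$. Everything else amounts to checking that the empty and singleton index sets collapse exactly as in Proposition~\ref{lem:VMgenphi}, so I do not anticipate a genuine obstacle.
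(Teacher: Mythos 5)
Your proposal is correct and is exactly the intended derivation: the paper states Corollary~\ref{cor:gen12} as a special case of Proposition~\ref{lem:VMgenphi} with no separate proof, and your specialization to $k=1,2$, including the four-term expansion of $\Delta^\phi_{\{1,2\}}\big(\sigma,\xi(x)\big)$ integrated against $\mu$, reproduces \eqref{eq:vmgenphi2}--\eqref{eq:vmgenphi3-2} verbatim.
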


Next we consider the following test functions for empirical measures of voter models:
\begin{linenomath}
\begin{align}\label{def:Ff}
F_f(\xi)\doteq \langle f,\mathbf m(\xi)^{\otimes 2}\rangle
=\sum_{x,y\in E}\pi(x)\pi(y)f\big(\xi(x),\xi(y)\big)
\end{align}
\end{linenomath}
for Borel measurable functions $f:S\times S\to \R$.
These test functions $F_f$ will
be used in Section~\ref{sec:atom} to investigate weak atomic convergence for empirical measures of voter models, where a criterion from \cite{EK:AT} is invoked (see the proof of Theorem~\ref{thmm:2}).

\begin{prop}\label{prop:genF}
For $F_f$ defined as in (\ref{def:Ff}) with respect to a Borel measurable function $f:S\times S\to \R$, we have
\begin{linenomath}
\begin{align}
\begin{split}
&L_{\sf VM}F_f(\xi)
=\sum_{x,y\in E}\pi(x)^2q(x,y)\big[f\big(\xi(y),\xi(y)\big)-f\big(\xi(x),\xi(x)\big)\big]\\
&-\sum_{x,y\in E}\pi(x)^2q(x,y)\left[f\big(\xi(y),\xi(x)\big)+f\big(\xi(x),\xi(y)\big)-2f\big(\xi(x),\xi(x)\big)\right]
\label{eq:Ff1}
\end{split}
\end{align}
\end{linenomath}
and
\begin{linenomath}
\begin{align}
\begin{split}\label{eq:Ff2}
L_\mu F_f(\xi)=&\sum_{x\in E}\pi(x)^2\int_S\big[f\big(\sigma,\sigma\big)-f\big(\xi(x),\xi(x)\big)\big]d\mu(\sigma)\\
&\hspace{-1.5cm}-\sum_{x\in E}\pi(x)^2\int_S\left[
f\big(\xi(x),\sigma\big)+
f\big(\sigma,\xi(x)\big)-2f\big(\xi(x),\xi(x)\big)\right]d\mu(\sigma)\\
&\hspace{-1.5cm}+\sum_{x,y\in E}\pi(x)\pi(y)\int_S\left[f\big(\xi(x),\sigma\big)+f\big(\sigma,\xi(y)\big)-2f\big(\xi(x),\xi(y)\big)\right]d\mu(\sigma).
\end{split}
\end{align}
\end{linenomath}
\end{prop}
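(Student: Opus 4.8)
The plan is to prove both identities by a direct expansion of $F_f$ as defined in \eqref{def:Ff}, rather than by appealing to Proposition~\ref{lem:VMgenphi}: $F_f$ is not of the product form $\phi$ treated there, and since $f$ is only assumed Borel measurable there is no approximation of $f$ by finite sums $\sum_i g_i(\sigma)h_i(\tau)$ of products of continuous functions that would let one reduce the claim to Corollary~\ref{cor:gen12}(2). So one computes by hand, and the two formulas share a common core.

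That core is a ``single-flip'' identity. Fix $x\in E$ and a type $\zeta\in S$, and let $\xi'$ be the configuration obtained from $\xi$ by replacing $\xi(x)$ with $\zeta$; taking $\zeta=\xi(y)$ produces the $\xi^{x,y}$ of \eqref{eq:xiab}, and $\zeta=\sigma$ produces $\xi^{x|\sigma}$. Splitting the double sum in \eqref{def:Ff} over $(a,b)$ according to whether $a=x$ and/or $b=x$, and then enlarging the two ``exactly one index equals $x$'' sums to range over all of $E$ at the cost of subtracting their $b=x$ (resp.\ $a=x$) contributions, I expect
\[
F_f(\xi')-F_f(\xi)=\pi(x)^2\big[f(\zeta,\zeta)+f(\xi(x),\xi(x))-f(\zeta,\xi(x))-f(\xi(x),\zeta)\big]+R_x^\zeta(\xi),
\]
where the cross term is
\[
R_x^\zeta(\xi)=\sum_{b\in E}\pi(x)\pi(b)\big[f(\zeta,\xi(b))-f(\xi(x),\xi(b))\big]+\sum_{a\in E}\pi(a)\pi(x)\big[f(\xi(a),\zeta)-f(\xi(a),\xi(x))\big].
\]

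For \eqref{eq:Ff1} I would set $\zeta=\xi(y)$, multiply by $q(x,y)$, and sum over $x,y\in E$. The key point is that $\sum_{x,y}q(x,y)R^{\xi(y)}_x(\xi)=0$: the double sums $\sum_{x,y,b}q(x,y)\pi(x)\pi(b)f(\xi(y),\xi(b))$ and $\sum_{x,y,b}q(x,y)\pi(x)\pi(b)f(\xi(x),\xi(b))$ are both equal to $F_f(\xi)$, the first by $\sum_x\pi(x)q(x,y)=\pi(y)$ and the second by $\sum_y q(x,y)=1$, and likewise for the two sums in $R^{\xi(y)}_x$ involving $f(\xi(a),\,\cdot\,)$. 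Only the diagonal term survives, and splitting off $\pm2\pi(x)^2q(x,y)f(\xi(x),\xi(x))$ rewrites it as the right-hand side of \eqref{eq:Ff1}. For \eqref{eq:Ff2} I would instead set $\zeta=\sigma$, sum over $x\in E$, and integrate $d\mu(\sigma)$. Now there is no kernel to kill the cross terms; instead $\sum_{x\in E}\pi(x)=1$ collapses the four sums inside $\sum_x R^\sigma_x(\xi)$ to single sums, two of which become $F_f(\xi)$ and two of which become $\sum_{y}\pi(y)f(\sigma,\xi(y))$ and $\sum_{x}\pi(x)f(\xi(x),\sigma)$; reinserting the trivial factor $\sum_x\pi(x)=1$ and writing $2\mu(\1)F_f(\xi)=\sum_{x,y}\pi(x)\pi(y)\int_S 2f(\xi(x),\xi(y))\,d\mu(\sigma)$ then recasts $\sum_x\int_S R^\sigma_x(\xi)\,d\mu(\sigma)$ as the last line of \eqref{eq:Ff2}, while the diagonal term, summed against $\pi(x)^2\int_S(\,\cdot\,)\,d\mu(\sigma)$ and regrouped, yields its first two lines.

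The only real obstacle is bookkeeping rather than analysis: since $f$ is not assumed symmetric, $f(\xi(x),\xi(y))$ and $f(\xi(y),\xi(x))$ must be kept distinct throughout, and the $b=x$/$a=x$ corrections extracted in the single-flip step must be paired correctly with the diagonal term when the final regrouping is done. Conceptually, the cancellation of the cross terms in the voting part is precisely the double conservation law $\pi q=\pi$ and $q\1=\1$ --- the same fact that makes every $A=\{i\}$ contribution vanish in Proposition~\ref{lem:VMgenphi}.
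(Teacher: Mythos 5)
Your proposal is correct and is essentially the paper's own argument: a direct expansion of $F_f(\xi')-F_f(\xi)$ split into the diagonal $(a,b)=(x,x)$ term and the two cross sums, with $\pi q=\pi$ and $q\1=\1$ killing the cross contributions in the voting part (the paper proves \eqref{eq:Ff1} exactly this way and leaves \eqref{eq:Ff2}, which you carry out correctly, to the reader). Packaging the computation as a single-flip identity used for both $\zeta=\xi(y)$ and $\zeta=\sigma$ is only a mild reorganization, not a different route.
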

\begin{proof}
We compute
\begin{linenomath}
\begin{align}
L_{\VM}F_f(\xi)
=&\sum_{x,y\in E}q(x,y)\sum_{a,b\in E}\pi(a)\pi(b)\left[f\big(\xi^{x,y}(a),\xi^{x,y}(b)\big)-f\big(\xi(a),\xi(b)\big)\right]\notag\\
=&\sum_{x,y\in E}\sum_{a,b:a= x\mbox{\;\tiny or }b= x}q(x,y)\pi(a)\pi(b)\left[f\big(\xi^{x,y}(a),\xi^{x,y}(b)\big)-f\big(\xi(a),\xi(b)\big)\right]\notag\\
\begin{split}\notag
=&\sum_{x,y\in E}q(x,y)\pi(x)^2\left[f\big(\xi(y),\xi(y)\big)-f\big(\xi(x),\xi(x)\big)\right]\\
&+\sum_{x,y\in E}\sum_{b:b\neq x}q(x,y)\pi(x)\pi(b)\left[f\big(\xi(y),\xi(b)\big)-f\big(\xi(x),\xi(b)\big)\right]\\
&+\sum_{x,y\in E}\sum_{a:a\neq x}q(x,y)\pi(a)\pi(x)\left[f\big(\xi(a),\xi(y)\big)-f\big(\xi(a),\xi(x)\big)\right].
\end{split}
\end{align}
\end{linenomath}
The required equation (\ref{eq:Ff1}) now follows from the preceding equality, if we apply again the fact that $\pi q=\pi$ and $q\mathbf \1=\1$ to its last two terms. The proof of (\ref{eq:Ff2}) is similar, and is left to the readers. 
\end{proof}

\section{Fleming-Viot processes and empirical measures of voter models}\label{sec:coal} 
We characterize Fleming-Viot processes by the following martingale-problem formulation. Recall the notations  in (\ref{def:integration}), (\ref{eq:restrict}) and (\ref{def:Amu}), and the classes of test functions $\Phi_k$ and $\Phi$ introduced in Section~\ref{sec:emvm}.
We define an operator $L^\mu_{\sf FV}$ on $\Phi$ by
\begin{linenomath}
\begin{align}\label{eq:FVgen2}
\begin{split}
L_{\sf FV}^{\mu} \phi(\lambda)\doteq \frac{1}{2}\sum_{1\le i<j\le k}\big\langle \Delta^{\phi}_{\{i,j\}},\lambda^{\otimes 2}\big\rangle
\prod_{\ell: \ell\ne i,j} \langle f_\ell,\lambda\rangle+\sum_{1\leq i\leq k}\langle A_\mu f_i,\lambda\rangle\prod_{\ell:\ell\neq i}\langle f_\ell,\lambda\rangle  
\end{split}
\end{align}
\end{linenomath}
for $\lambda\in\scrP(S)
$ and $\phi\in \Phi_k$ as in \eqref{def:phik}, and put $L_{\sf FV}=L^0_{\sf FV}$ to be consistent with the analogous notation for generators of voter models. Then the $\mu$-Fleming-Viot process $X$ can be characterized by the well-posed martingale problem: for all $\phi\in \Phi$, the process
\begin{linenomath}
\begin{align}\label{def:FV}
\phi(X_t)-\phi(X_0)-\int_0^t L^\mu_{\sf FV}\phi(X_s)ds
\end{align}
\end{linenomath}
is a continuous martingale (cf. \cite[Section 10.4]{EK:MP}).

Our program in the rest of this section 
 is to obtain a quantitative version of  the approximation:
\begin{linenomath}
\begin{align}\label{VMFV-approx}
\E_\xi[L_{\VM}\phi\circ \mathbf
m(\xi_s)]\simeq \mbox{constant}\cdot
L_{\FV}\phi\big( \mathbf m(\xi)\big) 
\end{align}
\end{linenomath} 
for enough large $s$ when suitable voting kernels are used.
We will give a rigorous form of this approximation for $\phi\in \Phi_2$ that is adequate to obtain the weak convergence of empirical measures for voter models in Section~\ref{sec:limit}, as well as a bound for the left-hand side of (\ref{VMFV-approx}) to gain its control at ineligible times $s$. 

Before stating and proving these results we need a preliminary inequality (Proposition~\ref{prop:dual} below). It requires the standard duality which connects the voter model and a system of coalescing $q$-Markov chains $\{X^x;x \in E\}$. Here, $X^x$ are rate-$1$ $q$-Markov chains starting at $x\in E$, and they move independently before meeting and together afterward. Let $M_{x,y}=\inf\{t\geq 0;X^x_t=X^y_t\}$ be this meeting time. In Section \ref{sec:dual}, we present in detail the graphical construction of this duality relation. In particular, for any fixed $t>0$ and $x,y \in E$, the construction provides an explicit construction of $\xi_t(x)$ and $\xi_t(y)$ in terms of coalescing chains $X^x$ and $X^y$ started at $x$ and $y$, respectively, run for time $t$, with mutation events occurring at rate $\mu(\1)$ along the paths of these chains. Here in Proposition~\ref{prop:dual} below and what follows, we write
\begin{linenomath} \begin{align}\label{def:mubar}
\overline{\mu}(\Sigma)=\frac{\mu(\Sigma)}{\mu(\1)}
\end{align}
\end{linenomath}
with the convention that $\tfrac{0}{0}=0$ when $\mu=0$, and $\E_\xi$ and $\P_\xi$ for expectation and probability of the voter model with generator $L^\mu_{\sf VM}$ (\ref{eq:vmgen}) and initial state $\xi\in S^E$, respectively. In addition, with respect to a system of coalescing $q$-chains $\{X^x;x\in E\}$ as above, we write $M_{x,y}=\inf\{t\geq 0;M^x_t=M^y_t\}$.

\begin{prop}\label{prop:dual} 
Given $f:S\times S\to \R$  satisfying
$f(\sigma,\sigma)\equiv 0$ and
$\sup_{\sigma,\tau}|f(\sigma,\tau)|\leq 1$, we have, for  all $\xi\in \{0,1\}^E$, $x,y\in E$ and $t\in \R_+$,
\begin{linenomath}
\begin{align}\label{f:bdd}
\begin{split}
&\Big|\E_\xi\left[f\big(\xi_t(x),\xi_t(y)\big)\right]-\E\left[f\big(\xi(X^x_t),\xi(X^y_t)\big)\right]\Big|\\
&\hspace{2cm}\leq 
C\big(1-e^{-\mu(\1)t}\big)\P(M_{x,y}>t)+C\mu(\1)\int_0^t \P(M_{x,y}>s)ds
\end{split}
\end{align}
\end{linenomath}
for some universal constant $C$. 
\end{prop}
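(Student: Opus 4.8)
The plan is to prove \eqref{f:bdd} by running the duality at a fixed time $t$ and tracking, under a single coupling, how the voter-model value at $x,y$ differs from the ``no-mutation'' coalescing-chain value $f(\xi(X^x_t),\xi(X^y_t))$. Using the graphical construction described in Section~\ref{sec:dual}, for fixed $t$ we build the two dual chains $X^x$ and $X^y$ (which coalesce at the meeting time $M_{x,y}$) together with independent Poisson mutation clocks of rate $\mu(\1)$ along each of the two (pre-meeting) path segments and along the single (post-meeting) segment; when a mutation fires, the type is resampled from $\overline{\mu}$. Then $\xi_t(x)$ equals $\xi$ evaluated at the endpoint of the $X^x$-path, \emph{unless} a mutation fired on that path, in which case it equals the type produced by the most recent mutation; similarly for $y$. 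Let $B_x$ (resp.\ $B_y$) be the event that at least one mutation fired on the segment of the $X^x$-path (resp.\ $X^y$-path) traced backward from time $t$ until the meeting time $M_{x,y}$, and let $B_0$ be the event that a mutation fired on the common segment after $M_{x,y}$.

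The key observation is the decomposition according to whether any mutation affects the outcome. On the event $\{M_{x,y}\le t\}\cap B_0^c\cap B_x^c\cap B_y^c$ — no mutation anywhere on the relevant portion of the dual — one has $\xi_t(x)=\xi_t(y)=\xi(\text{common endpoint})$, so $f(\xi_t(x),\xi_t(y))=0$ by the hypothesis $f(\sigma,\sigma)\equiv 0$; and on the same event also $X^x_t=X^y_t$, so $f(\xi(X^x_t),\xi(X^y_t))=0$. Hence the integrand of the left side of \eqref{f:bdd} vanishes on this event, and by $|f|\le 1$ the whole left side is bounded by
\begin{linenomath}
\begin{align*}
\P\big(\{M_{x,y}\le t\}\cap(B_0\cup B_x\cup B_y)\big)+\P(M_{x,y}>t)\cdot\sup|f|\cdot[\text{(same quantity for the chains)}],
\end{align*}
\end{linenomath}
but on $\{M_{x,y}>t\}$ we must be more careful: there the chains have not met, so $f(\xi(X^x_t),\xi(X^y_t))$ need not be zero; however, on $\{M_{x,y}>t\}\cap(B_x\cup B_y)^c$ (no mutation before time $t$ on either path, which on this event means no mutation at all) we get $\xi_t(x)=\xi(X^x_t)$ and $\xi_t(y)=\xi(X^y_t)$, so the two terms agree and the integrand again vanishes. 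Putting the two cases together, the left side of \eqref{f:bdd} is at most
\begin{linenomath}
\begin{align*}
2\,\P\big(\{M_{x,y}>t\}\cap(B_x\cup B_y)\big)+2\,\P\big(\{M_{x,y}\le t\}\cap(B_0\cup B_x\cup B_y)\big).
\end{align*}
\end{linenomath}

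It remains to estimate these two probabilities. For the first, on $\{M_{x,y}>t\}$ the two mutation clocks on $[0,t]$ are independent rate-$\mu(\1)$ Poisson processes, so $\P(B_x\cup B_y\mid M_{x,y}>t)\le 2(1-e^{-\mu(\1)t})$, giving a contribution bounded by $4(1-e^{-\mu(\1)t})\P(M_{x,y}>t)$, which is of the form of the first term on the right of \eqref{f:bdd}. For the second probability, condition on $M_{x,y}=s\le t$: the pre-meeting mutation clocks run on intervals of length $t-s$ each and the post-meeting clock on an interval of length $s\le t$, so $\P(B_0\cup B_x\cup B_y\mid M_{x,y}=s)\le \mu(\1)(t-s)+\mu(\1)(t-s)+\mu(\1)s\le 2\mu(\1)t$; a cleaner bound, which is what is actually needed, is obtained by instead writing the probability of at least one mutation on any of the three segments of total length $2(t-s)+s\le 2t$ as at most $\mu(\1)\cdot(\text{total length})$ and integrating against the law of $M_{x,y}$, producing $\mu(\1)\int_0^t(\cdots)\,ds$ after bounding the segment lengths in terms of $\P(M_{x,y}>s)$ via Fubini. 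Collecting terms and choosing $C$ large enough (e.g.\ $C=4$ suffices for a crude version) yields \eqref{f:bdd}.

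The main obstacle is purely bookkeeping: making precise, from the graphical construction in Section~\ref{sec:dual}, exactly which mutation events along which path segments are ``relevant'' — i.e.\ that only the \emph{last} mutation before time $t$ on each lineage determines $\xi_t(\cdot)$, and that absence of mutation on the portion of the dual forest spanned backward from $\{x,y\}$ up to and including the meeting is precisely what forces $\xi_t(x)=\xi_t(y)$ — and then pairing these events correctly with the analogous (mutation-free) description of $f(\xi(X^x_t),\xi(X^y_t))$ so that cancellation happens on a large event. Once the event decomposition is set up correctly, the remaining Poisson estimates and the Fubini step to recast segment lengths as time-integrals of $\P(M_{x,y}>s)$ are routine. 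I would be careful to state the coupling once, cleanly, so that the same probability space carries both $(\xi_t(x),\xi_t(y))$ and $(X^x_t,X^y_t)$, deferring the detailed verification of the graphical representation to Section~\ref{sec:dual} as the excerpt indicates.
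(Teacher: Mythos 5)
Your coupling and the use of cancellation on mutation-free events match the paper's strategy, but your event decomposition is too coarse, and this creates a genuine gap. The paper's partition treats the event (its $A_2=\{M_{x,y}<e(x,t)\wedge e(y,t)\le t\}$) where the chains coalesce \emph{before} any mutation on the two pre-meeting segments --- while mutations may still occur on the common post-coalescence segment --- as a zero-contribution event: on it, $\xi_t(x)$ and $\xi_t(y)$ inherit the identical type (the relevant mutant on the common ancestral line, or $\xi(X^x_t)$ if there is none), so $f\big(\xi_t(x),\xi_t(y)\big)=0$ by $f(\sigma,\sigma)\equiv 0$, and $f\big(\xi(X^x_t),\xi(X^y_t)\big)=0$ since $X^x_t=X^y_t$. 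You instead place $B_0$ (a mutation on the common segment) in the bad set, so your final bound contains the term $2\,\P\big(\{M_{x,y}\le t\}\cap B_0\big)$, which is of order $\E\big[1-e^{-\mu(\1)(t-M_{x,y})};\,M_{x,y}\le t\big]$. This cannot be dominated by the right-hand side of \eqref{f:bdd}: if $M_{x,y}$ is concentrated near $0$ (fast meeting), the right-hand side is essentially $0$ while your term is close to $1-e^{-\mu(\1)t}$. Equivalently, your Fubini step only converts segment lengths of the form $M_{x,y}\wedge t$ into $\int_0^t\P(M_{x,y}>s)\,ds$; the common segment has length $t-M_{x,y}$, which converts into $\int_0^t\P(M_{x,y}\le s)\,ds$ --- the wrong tail.

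The fix is exactly the extra case analysis you omitted: only mutations occurring (in dual time) \emph{strictly before} coalescence can produce a discrepancy, so the bad set should be $\{e(x,t)\wedge e(y,t)\le M_{x,y}\wedge t\}$ together with $\{e(x,t)\wedge e(y,t)\le t<M_{x,y}\}$. Splitting according to whether $M_{x,y}\le t$ or $M_{x,y}>t$ (the paper's $A_3$ and $A_4$) and using the independence of the mutation clocks from the chains then gives the bounds $2\mu(\1)\int_0^t\P(M_{x,y}>s)\,ds$ and $\big(1-e^{-2\mu(\1)t}\big)\P(M_{x,y}>t)$, which are of the required form. A secondary slip: when you condition on $M_{x,y}=s\le t$ you assign the pre-meeting segments length $t-s$ and the common segment length $s$; with your own definitions it is the reverse (pre-meeting segments have dual-time length $s$, the common segment $t-s$). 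That inversion is cosmetic, but the inclusion of $B_0$ in the bad set is not, and as written the claimed inequality does not follow from your decomposition.
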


The proof of Proposition~\ref{prop:dual} will be given in Section~\ref{sec:dual}, where we use the graphical construction mentioned above. Roughly speaking, it comes from the following observation. In order for the difference 
\[
f\big(\xi_t(x),\xi_t(y)\big)-f\big(\xi_0(X^x_t),\xi_0(X^y_t)\big)
\]
to be nonzero one of the following events must occur: (a) $M_{x,y} > t$ and no mutation events occur along the two paths before $t$, or (b) $M_{x,y}\leq t$ and at least one mutation event occurs along the paths before $M_{x,y}$. The probability of (a) is bounded by the first term on the right-hand side of (\ref{f:bdd}), and the probability
of (b) is bounded by the expected number of mutation events before $M_{x,y}\wedge t$, which is bounded by the second term on the right-side of (\ref{f:bdd}).

Now we proceed to a rigorous form of (\ref{VMFV-approx}). To compare the dynamics of empirical measures for voter models with the dynamics of Fleming-Viot processes, we first introduce some notation. 
For an irreducible transition kernel $(E,q)$, let $(q_t)$ be the semigroup of the rate-$1$ $q$-Markov chain on $E$. Then we set 
\begin{linenomath}
\begin{align}\label{TV}
d_E(t)\doteq \max_{x\in E}\|q_t(x,\cdot)-\pi\|_{\rm TV}
\end{align}
\end{linenomath}
for the maximal total variation distance of the semigroup $(q_t)$ from its stationary distribution $\pi$, and consider the choice of mixing time:
\begin{linenomath}
\begin{align}
{\bf t}_{\rm mix}\doteq \inf\left\{t\geq 0;d_E(t)\leq (2e)^{-1}\right\}.
\end{align}
\end{linenomath}
Here in (\ref{TV}) $\|\cdot\|_{\rm TV}$ refers to the usual total variation distance.
In addition, if $q$ is reversible, we write
$\mathbf g$ for the spectral gap of 
the voting kernel $q$, which is the difference between the
largest and the second largest eigenvalues of the
symmetric matrix $\pi(x)q(x,y)$ indexed by points of $E$.
To minimize the use of sum notation, we write $(V,V')$ for a random vector taking values in $E\times E$, which is independent of $\{X^x\}$ and the voter model, and has joint distribution given by
\[
\bfP(V=x,V'=y)\equiv\frac{\pi(x)^2q(x,y)}{\pid}
\]
(recall that $\pid=\sum_{x\in E}\pi(x)^2$).
Finally, $C_\phi$ stands for a strictly positive constant, which may vary from line to line and depends only on a function $\phi\in \Phi$.

\begin{prop}\label{prop:main}
For $\phi\in\Phi_2$ and $s,t\in (0,\infty)$ with $s<t$, we have the following two different estimates between the generators $L_{\sf VM}$ and $L_{\sf FV}$ where mutation is absent:
\begin{linenomath}
\begin{align}
&\sup_{\xi\in S^E}\Big| \E_\xi \left[
L_{\sf VM}\hphi(\xi_t)\right] - 2\pi_{\rm diag}
\bfP(M_{V,V'}>s) L_{\bf \FV} \phi\big(\mathbf m(\xi)\big)\Big|\notag
\\
\begin{split}\label{eq:main0} 
&\le C_\phi
\pid\P\big(M_{V,V}\in (s,t]\big)+C_\phi
\pid\big(1-e^{-\mu(\1) t}\big)\P\big(M_{V,V}>t\big)\\
&\hspace{.5cm}+C_\phi
\pid\mu(\1)\int_0^t \P(M_{V,V'}>r)dr+C_\phi d_E(t-s)\pid \P(M_{V,V'}>t),
\end{split}
\end{align}
\end{linenomath}
and
\begin{linenomath}
\begin{align}
&\sup_{\xi\in S^E}\Big| \E_\xi \left[
L_{\sf VM}\hphi(\xi_t)\right] - 2\pi_{\rm diag}
\bfP(M_{V,V'}>s) L_{\bf \FV} \phi\big( \mathbf m(\xi)\big)\Big|\notag\\
\begin{split}
&\le C_\phi
\pid\P\big(M_{V,V}\in (s,t]\big)+C_\phi\pid \big(1-e^{-\mu(\1) t}\big)\P(M_{V,V'}>t)\\
&\hspace{.5cm}+C_\phi\pid\mu(\1)\int_0^t \P(M_{V,V'}>r)dr
+C_\phi \pi_{\max}
e^{-\mathbf g(t-s)},\label{eq:main1}
\end{split}
\end{align}
\end{linenomath} 
where $\pi_{\max}=\max_{x\in E}\pi(x)$. We stress that the expectation $\E$ and probability $\P$ in the foregoing two displays are for the voter model with generator $L^\mu_{\sf VM}$ (\ref{eq:vmgen}).
\end{prop}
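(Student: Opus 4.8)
The plan is to rewrite the left-hand side through the coalescing dual, replace the coalescing dynamics over the terminal interval $[s,t]$ by the stationary (Fleming--Viot) behaviour, and absorb the four resulting discrepancies into the four error terms. For $\phi\in\Phi_2$, Corollary~\ref{cor:gen12}(2) together with the law of $(V,V')$ gives $L_{\sf VM}\hphi(\xi)=\pid\,\E[\Delta^\phi_{\{1,2\}}(\xi(V'),\xi(V))]$, whence
\[
\E_\xi\big[L_{\sf VM}\hphi(\xi_t)\big]=\pid\,\E\big[\E_\xi[\Delta^\phi_{\{1,2\}}(\xi_t(V'),\xi_t(V))]\big],
\]
the outer expectation being over the independent pair $(V,V')$. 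Since $\Delta^\phi_{\{1,2\}}(\sigma,\sigma)\equiv0$ and $\Delta^\phi_{\{1,2\}}$ is bounded ($f_1,f_2$ being continuous on the compact space $S$), I would apply Proposition~\ref{prop:dual} to $f=\Delta^\phi_{\{1,2\}}/C_\phi$, with $(V',V)$ in the role of $(x,y)$, to replace the voter model by the coalescing $q$-chains; after averaging over $(V,V')$ and multiplying by $\pid$ this costs $C_\phi(1-e^{-\mu(\1)t})\P(M_{V,V'}>t)+C_\phi\mu(\1)\int_0^t\P(M_{V,V'}>r)\,dr$, which are the second and third terms of both (\ref{eq:main0}) and (\ref{eq:main1}). (I use Proposition~\ref{prop:dual} in the form valid for an arbitrary $\xi\in S^E$, which the graphical construction in Section~\ref{sec:dual} provides.) It then remains to compare $\pid\,\E[\Delta^\phi_{\{1,2\}}(\xi(X^{V'}_t),\xi(X^V_t))]$ with $2\pid\,\P(M_{V,V'}>s)L_{\sf FV}\phi(\mathbf m(\xi))$.

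For this I would condition the coalescing system at time $s$. Write $h(a,b)\doteq\Delta^\phi_{\{1,2\}}(\xi(a),\xi(b))$, which is symmetric and vanishes on the diagonal. On $\{M_{V,V'}\le s\}$ the two chains have already coalesced, so $h(X^V_t,X^{V'}_t)=0$; on $\{M_{V,V'}>s\}$ the Markov property replaces the future by a fresh coalescing pair from the distinct sites $(X^V_s,X^{V'}_s)$ run for time $t-s$, so the quantity equals $\pid\,\E[\1_{\{M_{V,V'}>s\}}\,g(X^V_s,X^{V'}_s)]$ with $g(u,u')=\E^{\rm coal}_{u,u'}[h(Y_{t-s},Y'_{t-s})]$. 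Using diagonal vanishing once more together with the fact that, before meeting, a coalescing pair has the law of two independent $q$-chains, one gets $g(u,u')=\langle h,\pi^{\otimes2}\rangle+R_1(u,u')-R_2(u,u')$, where $R_1(u,u')=\sum_{a,b}(q_{t-s}(u,a)q_{t-s}(u',b)-\pi(a)\pi(b))h(a,b)$ and $R_2(u,u')=\E^{\rm ind}_{u,u'}[h(W_{t-s},W'_{t-s})\1_{\{M\le t-s\}}]$. Since $\langle h,\pi^{\otimes2}\rangle=\langle\Delta^\phi_{\{1,2\}},\mathbf m(\xi)^{\otimes2}\rangle=2L_{\sf FV}\phi(\mathbf m(\xi))$ by (\ref{eq:FVgen2}) with $k=2$, the $\langle h,\pi^{\otimes2}\rangle$-part of $g$ contributes exactly the claimed main term $2\pid\,\P(M_{V,V'}>s)L_{\sf FV}\phi(\mathbf m(\xi))$. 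For $R_2$, $|R_2(u,u')|\le C_\phi\,\P^{\rm coal}_{u,u'}(M\le t-s)$ (the coalescing and independent meeting laws agreeing), and the strong Markov property gives $\pid\,\E[\1_{\{M_{V,V'}>s\}}|R_2(X^V_s,X^{V'}_s)|]\le C_\phi\pid\,\P(M_{V,V'}\in(s,t])$, the first error term.

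It remains to bound the $R_1$-contribution $\pid\,\E[\1_{\{M_{V,V'}>s\}}R_1(X^V_s,X^{V'}_s)]$. For (\ref{eq:main0}) the crude uniform bound $|R_1(u,u')|\le C_\phi d_E(t-s)$ from the definition (\ref{TV}) of $d_E$ suffices: multiplying by $\pid\,\P(M_{V,V'}>s)$ and splitting $\P(M_{V,V'}>s)=\P(M_{V,V'}>t)+\P(M_{V,V'}\in(s,t])$ produces the fourth term of (\ref{eq:main0}) together with (as $d_E\le1$) a contribution absorbed in the first term. For the reversible estimate (\ref{eq:main1}) one must track $\pi$-weights, and here the form of $\phi\in\Phi_2$ is crucial: with $g_i(a)\doteq f_i(\xi(a))$ one has the rank-four identity $h=(g_1g_2)\otimes\1+\1\otimes(g_1g_2)-g_1\otimes g_2-g_2\otimes g_1$, so that, expanding $q_{t-s}g=\langle g,\pi\rangle+(q_{t-s}g-\langle g,\pi\rangle)$ in each factor, $|R_1(u,u')|\le C_\phi\sum_{g}(|G(u)|+|G(u')|)$ where the sum runs over $g\in\{g_1g_2,g_1,g_2\}$ and $G=q_{t-s}(g-\langle g,\pi\rangle)$, each satisfying $\|G\|_{L^2(\pi)}\le e^{-\mathbf g(t-s)}\|g\|_{L^2(\pi)}\le C_\phi e^{-\mathbf g(t-s)}$ by the spectral contraction of $q_{t-s}$ on the $\pi$-mean-zero subspace. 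By Cauchy--Schwarz and $\pi q_s=\pi$,
\[
\pid\,\E\big[\1_{\{M_{V,V'}>s\}}|G(X^V_s)|\big]\le\pid\Big(\tfrac1{\pid}\sum_x\pi(x)^2(q_s(G^2))(x)\Big)^{1/2}\le(\pid\,\pi_{\max})^{1/2}\|G\|_{L^2(\pi)}\le\pi_{\max}C_\phi e^{-\mathbf g(t-s)},
\]
using $\sum_x\pi(x)^2(q_s(G^2))(x)\le\pi_{\max}\langle q_s(G^2),\pi\rangle=\pi_{\max}\langle G^2,\pi\rangle$ and $\pid\le\pi_{\max}$; the same bound holds with $X^{V'}_s$ in place of $X^V_s$ since $\sum_x\pi(x)^2q(x,\cdot)\le\pi_{\max}\pi(\cdot)$. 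Summing the finitely many terms yields the last term $C_\phi\pi_{\max}e^{-\mathbf g(t-s)}$ of (\ref{eq:main1}).

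The real obstacle I anticipate is the reversible bound (\ref{eq:main1}): getting the clean prefactor $\pi_{\max}$ --- as opposed to, say, $\pi_{\min}^{-1/2}$, which is all that feeding a direct spectral estimate for $d_E(t-s)$ into (\ref{eq:main0}) would give --- forces one to combine the finite-rank structure of $(a,b)\mapsto\Delta^\phi_{\{1,2\}}(\xi(a),\xi(b))$, the $L^2(\pi)$-contraction of $q_{t-s}$, and the stationarity $\pi q_s=\pi$ in the weighted averages, the decisive cancellation being $\pid\cdot(\pi_{\max}/\pid)^{1/2}=(\pid\pi_{\max})^{1/2}\le\pi_{\max}$. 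A secondary care point is the rigour of the time-$s$ conditioning, where one repeatedly uses the strong Markov property of the coalescing system together with the elementary fact that a coalescing pair and the associated independent pair have identical law up to their meeting time (so in particular identical meeting-time distributions).
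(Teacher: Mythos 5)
Your proposal is correct and follows essentially the same route as the paper's proof: the dual/coalescing representation of $L_{\sf VM}\hphi$ via $(V,V')$, Proposition~\ref{prop:dual} for the mutation error terms, the Markov property at time $s$ with a total-variation comparison for \eqref{eq:main0}, and the expansion of $\Delta^\phi_{\{1,2\}}$ into products combined with Cauchy--Schwarz, the $L^2(\pi)$ spectral contraction, and the bound $\pid(\pi_{\max}/\pid)^{1/2}\le\pi_{\max}$ for \eqref{eq:main1}. Your explicit bookkeeping of the coalescing-versus-independent discrepancy after time $s$ (your $R_2$, controlled by $\P(M_{V,V'}\in(s,t])$) is exactly what the paper absorbs into its error term $\vep_0$ in \eqref{bdd:vep0}.
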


The inequalities in Proposition~\ref{prop:main} will be used to formalize the approximation (\ref{VMFV-approx}) by making
\begin{align}\label{Mapp-VV}
2\pi_{\rm diag}
\bfP(M_{V,V'}>s)\to 1 
\end{align}
with appropriate choices of $s$ and voting kernels. See the proof of Step~\ref{s:2} of Lemma~\ref{lem:qvbnd} for further details.

\begin{proof}[Proof of Proposition~\ref{prop:main}]
As in the proof of \cite[Proposition~6.1]{CCC}, 
first we derive a preliminary estimate for $\E_\xi[L_{\sf VM}\phi\circ \mathbf m(\xi)]$ which will be refined to obtain (\ref{eq:main0}) and (\ref{eq:main1}).

Now we use the explicit form (\ref{eq:vmgenphi3-1}) of $L_{\sf VM}\phi\circ \mathbf m(\xi)$ for $\phi\in \Phi_2$ taking the form (\ref{def:phik}),
Proposition~\ref{prop:dual} and the fact that $\Delta^\phi_{\{1,2\}}\big(\xi(x),\xi(x)\big)\equiv 0$. They give
\begin{linenomath}
\begin{align}
&\E_\xi\left[L_{\sf VM}\phi\circ \mathbf m(\xi_t)\right]=\pid\E\left[\Delta^\phi_{\{1,2\}}\big(\xi_t(V),\xi_t(V')\big)\right]\notag\\
\begin{split}\label{eq:main2}
=&\pid\E\left[\Delta^\phi_{\{1,2\}}\big(\xi(X^V_t),\xi(X^{V'}_t)\big);M_{V,V'}>s\right]
+\pid\vep_0(t,\xi),
\end{split}
\end{align}
\end{linenomath}
where the term $\vep_0(t,\xi)$ satisfies the inequality
\begin{linenomath}
\begin{align}\label{bdd:vep0}
\begin{split}
|\vep_0(t,\xi)|\leq &C_\phi \Big(\P(M_{V,V'}\in (s,t])+\big(1-e^{-\mu(\1) t}\big) \P(M_{V,V'}>t)\\
&\hspace{4cm}+\mu(\1)\int_0^t\P(M_{V,V'}>r)dr\Big).
\end{split}
\end{align}
\end{linenomath}
In the rest of the proof, we will consider two different estimates for the first term on the right-hand side of (\ref{eq:main2}).

For the first estimate, we
apply the Markov property of $(X^V,X^{V'})$ at time $s$ to get
\begin{linenomath}
\begin{align}
&\pi_{\rm diag}\E \left[\Delta^\phi_{\{1,2\}}\big(\xi(X^V_t),\xi(X^{V'}_t)\big); M_{V,V'}>s\right] \notag\\
=& \pi_{\rm diag}\sum_{x,y\in E}\bfP(X^V_s=x,X^{V'}_s=y,M_{V,V'}>s) \E\left[\Delta^\phi_{\{1,2\}}\big(\xi(X^x_{t-s}),\xi(X^y_{t-s})\big)\right]\notag\\
=&\pi_{\rm diag} \sum_{x,y\in E}\bfP(X^V_s=x,X^{V'}_s=y,M_{V,V'}>s)\notag\\
&\times\Bigg\{\sum_{a,b\in
  E}[q_{t-s}(x,a)q_{t-s}(y,b)-\pi(a)\pi(b)]\Delta^\phi_{\{1,2\}}\big(\xi(a),\xi(b)\big) + 2 L_\FV\phi\big(\mathbf m(\xi)\big)\Bigg\}\notag\\
=&
\pid\vep_1(s,t,\xi)+ 2\pi_{\rm diag} \bfP(M_{V,V'}>s)L_\FV\phi\big(\mathbf m(\xi)\big),\label{main-0}
\end{align}
\end{linenomath}
where the second equality follows from the definition
\eqref{eq:FVgen2} of $L_{\FV}=L_{\sf FV}^0$ and the
definition of $\Delta^\phi_{\{1,2\}}$ in
(\ref{eq:restrict}), and in the last equality the error term
$\vep_1(s,t,\xi)$ satisfies the inequality
\begin{linenomath}
\begin{align*}\begin{split}
&|\vep_1(s,t,\xi)|\le C_\phi d_E(t-s) \bfP(M_{V,V'}>s).
\end{split}
\end{align*}
\end{linenomath}
(cf. \cite[Proposition~4.5]{Levin_2008}). 
Applying (\ref{main-0}) to (\ref{eq:main2}), we
  obtain (\ref{eq:main0}).

To obtain the second estimate (\ref{eq:main1}), now we consider the difference
\begin{linenomath}
\begin{align}
&\pi_{\rm diag} \E\left[\Delta^\phi_{\{1,2\}}\big(\xi(X^V_t),\xi(X^{V'}_t)\big);M_{V,V'}>s\right]-2\pid\P(M_{V,V'}>s) L_{\sf FV}\phi\big(\mathbf m(\xi)\big)\notag\\
\begin{split}
=&\pid\E\Bigg[\Delta^\phi_{\{1,2\}}\big(\xi(X^V_t),\xi(X^{V'}_t)\big)\\
&\hspace{2cm}-\sum_{x,y\in E}\pi(x)\pi(y)\Delta^\phi_{\{1,2\}}\big(\xi(x),\xi(y)\big);M_{V,V'}>s\Bigg].\label{vv1}
\end{split}
\end{align}
\end{linenomath}
Taking into account the expansion
\[
\Delta^\phi_{\{1,2\}}(\sigma,\tau)=f_1(\sigma)f_1(\sigma)-f_1(\sigma)f_2(\tau)-f_1(\tau)f_2(\sigma)+f_2(\sigma)f_2(\tau)
\]
for $\phi(\lambda)\equiv \langle f_1,\lambda\rangle\langle f_2,\lambda\rangle$,
we see that (\ref{vv1}) and Markov property
imply
\begin{linenomath}
\begin{align*}
&\left|\pi_{\rm diag} \E\left[\Delta^\phi_{\{1,2\}}\big(\xi(X^V_t),\xi(X^{V'}_t\big);M_{V,V'}>s\right]-2\pid\P(M_{V,V'}>s) L_{\sf FV}\phi\big( \mathbf m(\xi)\big)\right|\\
\leq &C_\phi\pid \sum_{j,k\in \{1,2\},W\in \{V,V'\}}\E\left[\big|q_{t-s}(f^k_j\circ\xi)(X^{W}_s)-\pi(f_j^k\circ\xi)\big|\right]\\
\leq &C_\phi \pid\sum_{j,k\in \{1,2\},W\in \{V,V'\}}\E\left[\big(q_{t-s}f_j^k\circ\xi(X^{W}_s)-\pi(f_j^k\circ\xi)\big)^2\right]^{1/2}\\
\leq & C_\phi\pid\sum_{j,k=1}^2\left(\frac{\pi_{\max}}{\pid}{\rm Var}_\pi(f_j^k)\right)^{1/2}e^{-\mathbf g(t-s)},
\end{align*}
\end{linenomath}
where $f_j^k$ is the $k$-th power of $f_j$, ${\rm Var}_\pi(f) = \sum_{x\in
    E}\big(f(x)-\pi(f)\big)^2\pi(x)$, and the last inequality follows from
the definition of $(V,V')$ and a standard variance bound for the mixing times of Markov chains
\cite[Lemma~2.4]{Diaconis/Saloff Coste}.
Applying the foregoing inequality to (\ref{eq:main2}) gives (\ref{eq:main1}). The proof is complete.
\end{proof}

We close this section with a bound for  empirical measures of voter models, which will be used to control the left-hand side of (\ref{VMFV-approx}) at times $s$ when we cannot validate (\ref{Mapp-VV}). See the proof of Lemma~\ref{lem:qvbnd} for further details.

\begin{prop}\label{prop:genbdd}
For $\phi\in \Phi$ and $t\in (0,\infty)$, it holds that 
\begin{linenomath}
\begin{align}
\begin{split}
\sup_{\xi\in S^E}\E_\xi \left[\big|L_{\sf VM} \hphi(\xi_{t})\big|
\right]
&\leq C_\phi\pid \P(M_{V,V'}>t)\\
&\hspace{1cm}+C_\phi\pid \mu(\1)\int_0^t \P(M_{V,V'}>r)dr.\label{eq:gen}
\end{split}
\end{align}
\end{linenomath}
\end{prop}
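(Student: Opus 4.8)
The plan is to prove Proposition~\ref{prop:genbdd} by reducing the general case $\phi\in\Phi_k$ to the two-point estimate already encoded in Proposition~\ref{prop:dual}. Starting from the explicit formula (\ref{eq:vmgenphi1-1}) in Proposition~\ref{lem:VMgenphi}, we have
\begin{linenomath}
\begin{align*}
L_{\sf VM}\hphi(\xi)=\sum_{\substack{A\subseteq\{1,\dots,k\}\\ |A|\ge 2}}\phi_{A^\complement}\circ\mathbf m(\xi)\sum_{x,y\in E}\pi(x)^{|A|}q(x,y)\Delta^\phi_A\big(\xi(y),\xi(x)\big).
\end{align*}
\end{linenomath}
Since each $f_i$ is continuous on the compact space $S$, the factors $\phi_{A^\complement}\circ\mathbf m(\xi)$ and the ``extra'' factors $\pi(x)^{|A|-2}$ of weight are uniformly bounded by a constant depending only on $\phi$; moreover $\pi(x)^{|A|-2}\le 1$. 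Hence, up to the constant $C_\phi$ and the finitely many choices of $A$, it suffices to bound
\begin{linenomath}
\begin{align*}
\sup_{\xi\in S^E}\E_\xi\Big[\Big|\sum_{x,y\in E}\pi(x)^2q(x,y)\Delta^\phi_A\big(\xi_t(y),\xi_t(x)\big)\Big|\Big]
\end{align*}
\end{linenomath}
for each fixed $A$ with $|A|\ge 2$.

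First I would fix such an $A$ and isolate two ``diagonal'' coordinates, say $i_0,j_0\in A$, writing $\Delta^\phi_A(\sigma,\tau)=[f_{i_0}(\sigma)-f_{i_0}(\tau)][f_{j_0}(\sigma)-f_{j_0}(\tau)]\prod_{\ell\in A\setminus\{i_0,j_0\}}[f_\ell(\sigma)-f_\ell(\tau)]$. Define $g(\sigma,\tau)\doteq\Delta^\phi_A(\sigma,\tau)/c_\phi$, where $c_\phi$ is chosen (in terms of $\sup_\sigma|f_i(\sigma)|$) so that $\sup_{\sigma,\tau}|g(\sigma,\tau)|\le 1$; note also $g(\sigma,\sigma)\equiv 0$ because each difference vanishes on the diagonal. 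Then $\sum_{x,y}\pi(x)^2q(x,y)\Delta^\phi_A\big(\xi_t(y),\xi_t(x)\big)=c_\phi\,\pid\,\E\big[g\big(\xi_t(V),\xi_t(V')\big)\big]$ by the definition of $(V,V')$, where the inner expectation is over the voter model only. Applying Proposition~\ref{prop:dual} with this $g$ (conditioning on $(V,V')=(x,y)$ and then averaging) gives
\begin{linenomath}
\begin{align*}
\Big|\E\big[g\big(\xi_t(V),\xi_t(V')\big)\big]-\E\big[g\big(\xi(X^V_t),\xi(X^{V'}_t)\big)\big]\Big|\le C\big(1-e^{-\mu(\1)t}\big)\P(M_{V,V'}>t)+C\mu(\1)\int_0^t\P(M_{V,V'}>r)dr.
\end{align*}
\end{linenomath}
For the remaining term $\E\big[g\big(\xi(X^V_t),\xi(X^{V'}_t)\big)\big]$ I would observe that on $\{M_{V,V'}\le t\}$ the two dual chains have coalesced, so $X^V_t=X^{V'}_t$ and $g\big(\xi(X^V_t),\xi(X^{V'}_t)\big)=g(\text{same},\text{same})=0$; hence this expectation is supported on $\{M_{V,V'}>t\}$ and is bounded in absolute value by $\P(M_{V,V'}>t)$. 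Combining, $\big|\E[g(\xi_t(V),\xi_t(V'))]\big|\le C\P(M_{V,V'}>t)+C\mu(\1)\int_0^t\P(M_{V,V'}>r)dr$, and multiplying by $c_\phi\pid$ and summing over the $O(1)$ admissible sets $A$ yields (\ref{eq:gen}).

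The main obstacle is essentially bookkeeping rather than a genuine analytic difficulty: one must check that the passage from $\Delta^\phi_A$ to a normalized $g$ with $g(\sigma,\sigma)\equiv 0$ and $\|g\|_\infty\le1$ is legitimate (the diagonal-vanishing is automatic, and the sup-bound only costs a $\phi$-dependent constant), and that every auxiliary factor — the $\phi_{A^\complement}\circ\mathbf m(\xi_t)$ term and the surplus powers $\pi(x)^{|A|-2}\le1$ — is absorbed into $C_\phi$ uniformly in $\xi$ and in $t$. The point worth stating carefully is why no ``$1-e^{-\mu(\1)t}$'' factor is needed in front of the $\P(M_{V,V'}>t)$ term here (unlike in Proposition~\ref{prop:main}): it is because we are not comparing $L_{\sf VM}\hphi$ against $L_{\sf FV}\phi$, so the genuinely-voter contribution on $\{M_{V,V'}>t\}$ is kept as is and simply bounded by $1$ in sup-norm. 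With that observation in place the proof is a routine application of Proposition~\ref{prop:dual} and the coalescence property of the dual chains, and the analogue of (\ref{eq:vmgenphi1-2}) handles the mutation part $L_\mu\hphi$ the same way (it contributes only to the $\mu(\1)\int_0^t\cdots$ term, which is why it does not appear separately on the right of (\ref{eq:gen})).
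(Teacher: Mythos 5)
Your overall route --- the explicit formula (\ref{eq:vmgenphi1-1}), absorbing $\phi_{A^\complement}\circ\mathbf m$ and the surplus powers of $\pi(x)$ into $C_\phi$, then Proposition~\ref{prop:dual} together with the observation that the test function vanishes on the diagonal while the dual chains have coalesced on $\{M_{V,V'}\le t\}$ --- is precisely the paper's. But there is a genuine gap in where the absolute value sits. The left-hand side of (\ref{eq:gen}) is $\E_\xi\big[|L_{\sf VM}\hphi(\xi_t)|\big]$, the expectation of an absolute value, whereas your final chain of estimates only controls $\big|\E\big[g\big(\xi_t(V),\xi_t(V')\big)\big]\big|$, the absolute value of an expectation. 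Since $g=\Delta^\phi_A/c_\phi$ is genuinely signed, the inequality $|\E[\,\cdot\,]|\le\E[|\cdot|]$ cannot be reversed, and Proposition~\ref{prop:dual} applied to the signed $g$ compares signed expectations only, so it cannot deliver the bound that (\ref{eq:gen}) asserts. The intermediate reduction has the same defect: replacing $\pi(x)^{|A|}$ by $\pi(x)^2$ term by term is legitimate only after the absolute value has been pushed inside the sum over $x,y$, and at that point the quantity to be controlled is $\sum_{x,y}\pi(x)^2q(x,y)\,\E_\xi\big[|\Delta^\phi_A\big(\xi_t(y),\xi_t(x)\big)|\big]$, not the signed average with which you end.

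The repair is small and is exactly what the paper does. First record the deterministic pointwise bound
\begin{linenomath}
\begin{align*}
|L_{\sf VM}\hphi(\xi)|\le C_\phi\sum_{x,y\in E}\pi(x)^2q(x,y)\1_{\{\xi(x)\ne\xi(y)\}},
\end{align*}
\end{linenomath}
which follows from (\ref{eq:vmgenphi1-1}) and your own observation that $|\Delta^\phi_A(\sigma,\tau)|\le C_\phi\1_{\{\sigma\ne\tau\}}$. Then apply Proposition~\ref{prop:dual} to the nonnegative function $f(\sigma,\tau)=\1_{\{\sigma\ne\tau\}}$ (it is bounded by $1$ and vanishes on the diagonal, so the hypotheses hold); your coalescence argument bounds the dual term by $\P(M_{x,y}>t)$ and yields $\sup_\xi\P_\xi\big(\xi_t(x)\ne\xi_t(y)\big)\le (C+1)\P(M_{x,y}>t)+C\mu(\1)\int_0^t\P(M_{x,y}>r)\,dr$. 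Taking $\E_\xi$ termwise in the pointwise bound and averaging against $\pi(x)^2q(x,y)$ (the law of $(V,V')$ up to the factor $\pid$) gives (\ref{eq:gen}). One further correction: the proposition bounds only the voting part $L_{\sf VM}\hphi$, so no analogue of (\ref{eq:vmgenphi1-2}) is needed; the $\mu(\1)\int_0^t$ term on the right of (\ref{eq:gen}) arises because $\E_\xi$ is taken under the dynamics with mutation and enters through Proposition~\ref{prop:dual}, not from $L_\mu\hphi$.
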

\begin{proof}
To obtain (\ref{eq:gen}), we first claim that for $\phi\in \Phi$ as in (\ref{def:phik}),
\begin{linenomath}
\begin{align}\label{eq:vmgenbound}
\begin{split}
|L_{\sf VM} \phi \circ\mathbf m(\xi) | \leq 
C_\phi  \sum_{x,y \in E}\pi(x)^2q(x,y) \1_{\{\xi(x)\ne\xi(y)\}}.
\end{split}
\end{align}
\end{linenomath}
To see this, notice that for all $A\subseteq \{1,\dots,k\}$ with $|A|\ge 2$,
\begin{linenomath}
\begin{align}\label{eq:genbdd0}
\begin{split}
&\sum_{x,y\in E}\pi(x)^{|A|}q(x,y)|\Delta^\phi_A\big(\xi(x),\xi(y)\big)|\le C_\phi \sum_{x,y\in
  E}\pi(x)^2q(x,y)\1_{\{\xi(x)\ne \xi(y)\}},
  \end{split}
\end{align}
\end{linenomath}
and so (\ref{eq:vmgenbound}) follows from (\ref{eq:vmgenphi1-1}). 

Second we can repeat the argument for 
(\ref{eq:main2}) and obtain from Proposition~\ref{prop:dual} that, for all $x,y\in E$,
\begin{linenomath}
\begin{align*}
\sup_{\xi\in S^E}\P_\xi\big(\xi_t(x)\neq \xi_t(y)\big)\leq (C+1)\P(M_{x,y}>t)+C\mu(\1)\int_0^t \P(M_{x,y}>r)dr.
\end{align*}
\end{linenomath}
Now (\ref{eq:gen}) follows upon applying the foregoing inequality to (\ref{eq:vmgenbound}).
\end{proof}

\section{Weak convergence of empirical measures for voter models}\label{sec:limit}
The goal of this section is to prove Theorem~\ref{thmm:1}, which concerns weak convergence of empirical measures for voter models. Throughout this section and Section~\ref{sec:atom}, we consider a sequence of $(E_n,q^{(n)},\mu_n)$-voter models with
$N_n=\#E_n\nearrow \infty$.
As before, $(E_n,q^{(n)})$ are irreducible and $\mu_n$ are finite measures on the same type space $S$.

We continue to use the notations introduced in Section~\ref{sec:emvm} and \ref{sec:coal}, except that they now carry either subscripts `$n$' or superscripts `$(n)$'. We also need here the parameter $\gamma_n$ to time-change the $(E_n,q^{(n)},\mu_n)$-voter model, which is chosen to be the expected first meeting times of two independent $q^{(n)}$-Markov chains starting from stationarity (see also \cite[Theorem~2.2]{CCC}). The measure-valued processes
\begin{linenomath}
\begin{align}\label{timechange}
X^{(n)}_t=\mathbf m(\xi_{\gamma_nt})
\end{align}
\end{linenomath}
for the $(E_n,q^{(n)},\mu_n)$-voter models are the central object of this section.

\begin{thm}\label{thmm:1}
Let $\ms P(S)$ be equipped with the Prohorov metric.
Suppose that 
\begin{enumerate}
\item [\rm (i)] $\displaystyle\lim_{n\to\infty}\pid^{(n)}=0 $,
\item [\rm (ii)] $\tmeetn\mu_n$ converges weakly to a finite measure $\mu$ on $S$,
\item [\rm (iii)] $\displaystyle X_0^{(n)}\convdn \widetilde{X}_0$ in $\ms P(S)$,
\end{enumerate}
and one of the following conditions holds: 
\begin{enumerate}
\item [\rm (iv-1)] $
  \displaystyle \lim_{n\to\infty} 
\frac{\tmixn}{\tmeetn}=0$, 
\item [\rm (iv-2)] the $q^{(n)}$-chains are
  reversible with $\displaystyle \lim_{n\to\infty}
\dfrac{\log\big(e\vee\tmeetn\pinmax \big)}{\mathbf g_n\tmeetn}=0$.
\end{enumerate}
If 
$X$ is the $\mu$-Fleming-Viot process such that its initial condition has the same distribution as $\widetilde{X}_0$, then we have
\begin{equation}\label{eq:FVvmlimit}
X^{(n)}\To X\quad 
\text{ in } D\big(\R_+,\scrP(S)\big)
\end{equation}
with respect to Skorokhod's $J_1$-topology. 
\end{thm}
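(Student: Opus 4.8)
The plan is to establish \eqref{eq:FVvmlimit} by the standard route for convergence of Markov processes: prove tightness of the laws of $X^{(n)}$ in $D(\R_+,\scrP(S))$, and then identify every subsequential limit as the unique solution to the martingale problem for $L^\mu_{\sf FV}$ characterized in \eqref{def:FV}. Since $\scrP(S)$ is compact (as $S$ is compact), compact containment is automatic, so tightness reduces to an Aldous-type oscillation estimate: it suffices to control, for each $\phi\in\Phi$ (a separating-and-convergence-determining class by Stone--Weierstrass), the modulus of continuity of $t\mapsto\phi(X^{(n)}_t)$. This in turn follows from a uniform bound on $\E_\xi[|L^{\mu_n}_{\sf VM}(\phi\circ\mathbf m)(\xi_{\gamma_n t})|]$ together with the martingale decomposition of $\phi\circ\mathbf m(\xi_{\gamma_n\cdot})$; the mutation part $L_{\mu_n}$ contributes an order-$\gamma_n\mu_n(\1)$ term that stays bounded by hypothesis (ii), and the voting part is handled by Proposition~\ref{prop:genbdd} after time-rescaling, using $\pid^{(n)}\to0$ from (i) and the fact that under (iv-1) or (iv-2) the rescaled meeting-time tails $\P(M_{V,V'}>\gamma_n r)$ are integrable uniformly in $n$.

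Next, for identification of the limit, fix $\phi\in\Phi_2$ and write the Dynkin martingale
\begin{linenomath}
\begin{align*}
M^{(n),\phi}_t=\phi(X^{(n)}_t)-\phi(X^{(n)}_0)-\gamma_n\int_0^t L^{\mu_n}_{\sf VM}(\phi\circ\mathbf m)(\xi_{\gamma_n s})\,ds.
\end{align*}
\end{linenomath}
The key point is that $\gamma_n L^{\mu_n}_{\sf VM}(\phi\circ\mathbf m)(\xi_{\gamma_n s})$, after taking conditional expectation and using the Markov property at a time $s-\delta$ for a small but fixed $\delta>0$, is uniformly close to $L^\mu_{\sf FV}\phi(X^{(n)}_s)$: this is exactly what Proposition~\ref{prop:main} delivers, since $\gamma_n L_{\sf VM}$ contributes $2\pid^{(n)}\bfP(M_{V,V'}>\gamma_n\delta)\cdot\gamma_n$-scaled terms and the normalization $\gamma_n$ is chosen precisely so that $2\pid^{(n)}\gamma_n\bfP(M_{V,V'}>\gamma_n\delta)\to1$ for each fixed $\delta$ (the content of \eqref{Mapp-VV}, via the mixing conditions (iv)), while the error terms in \eqref{eq:main0}/\eqref{eq:main1} and the bound at the ineligible times $s\in[0,\delta]$ from Proposition~\ref{prop:genbdd} vanish as $n\to\infty$ and then $\delta\to0$; the $\gamma_n L_{\mu_n}$ part converges to $\sum_i\langle A_\mu f_i,\cdot\rangle\prod_{j\ne i}\langle f_j,\cdot\rangle$ by Corollary~\ref{cor:gen12}(2), hypothesis (ii), and continuity. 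Passing to a subsequential limit $X$ and using that $X^{(n)}\to X$ in $\scrP(S)$-valued $J_1$ makes $\phi(X^{(n)}_\cdot)\to\phi(X_\cdot)$ along the Skorokhod topology, one concludes $\phi(X_t)-\phi(X_0)-\int_0^t L^\mu_{\sf FV}\phi(X_s)\,ds$ is a martingale for all $\phi\in\Phi$; continuity of the limit follows because the jumps of $X^{(n)}$ are of size $O(\pinmax^{(n)})=o(1)$. By well-posedness of this martingale problem and hypothesis (iii) for the initial law, the limit is the asserted $\mu$-Fleming-Viot process, and since the limit is unique the full sequence converges.

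The main obstacle is the interchange of limits in the identification step: one must show that the difference between $\gamma_n\E_\xi[L^{\mu_n}_{\sf VM}(\phi\circ\mathbf m)(\xi_{\gamma_n s})\mid\mathscr F_{\gamma_n(s-\delta)}]$ and $L^\mu_{\sf FV}\phi(X^{(n)}_s)$, integrated in $s$ over $[0,t]$, tends to $0$ in probability, uniformly enough to survive passing to the Skorokhod limit. This requires (a) that the error bounds in Proposition~\ref{prop:main} — in particular $d_{E_n}(\gamma_n(t-s))$ or $e^{-\mathbf g_n\gamma_n(t-s)}$ against $\pid^{(n)}\gamma_n\P(M_{V,V'}>\gamma_n t)$ — actually go to zero, which is where conditions (iv-1) and (iv-2) enter in an essential way and must be checked carefully; (b) that the contribution of the excluded initial window $[0,\delta]$, controlled by $\gamma_n\pid^{(n)}\int_0^{\gamma_n\delta}\P(M_{V,V'}>r)\,dr$, is $O(\delta)$ uniformly in $n$, which again uses the mixing hypotheses to bound meeting-time tails; and (c) reconciling the conditional-expectation smoothing at scale $\delta$ with the fact that $\phi(X^{(n)}_\cdot)$ only converges in $J_1$, handled by a routine but slightly delicate argument splitting $[0,t]$ into the good set where $X^{(n)}$ is close to $X$ and a set of small Lebesgue measure. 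I expect the bulk of the technical work — much of it parallel to \cite{CCC} but complicated by the mutation terms in Corollary~\ref{cor:gen12}(2) and Proposition~\ref{prop:genF} — to lie in verifying these three points rigorously.
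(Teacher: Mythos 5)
Your skeleton — tightness from the generator bounds, identification of subsequential limits via a lagged conditional-expectation comparison based on Propositions~\ref{prop:main} and \ref{prop:genbdd}, then uniqueness from well-posedness of the Fleming--Viot martingale problem — is the same as the paper's. But there are two problems in the identification step. The smaller one: your ``key point'' rests on the claim that $2\gamma_n\pid^{(n)}\P^{(n)}(M_{V,V'}>\gamma_n\delta)\to 1$ for each \emph{fixed} $\delta>0$. That is false: by the exponential limit law \eqref{eq:exptail} this quantity tends to $e^{-\delta}$, not $1$. The paper realizes \eqref{Mapp-VV} by taking the lag $s_n'$ with $\tmixn\ll s_n'\ll\gamma_n$, i.e.\ $\delta_n=s_n'/\gamma_n\to0$, which is exactly what \eqref{eq:snprimelimits} provides. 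Your iterated limit ($n\to\infty$ first, then $\delta\to0$) can absorb the extra factor $e^{-\delta}$, so this is repairable, but as written the central normalization claim is wrong and the bookkeeping of the error terms in \eqref{eq:main0}/\eqref{eq:main1} has to be redone with the $e^{-\delta}$ discrepancy carried along.

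The genuine gap is the passage from $\Phi_2$ to all of $\Phi$. Proposition~\ref{prop:main} is stated, and proved via duality for \emph{two} coalescing chains, only for $\phi\in\Phi_2$; your argument therefore yields the limiting martingale property only for test functions in $\Phi_1\cup\Phi_2$, yet you then assert it ``for all $\phi\in\Phi$'', which is what the well-posedness statement \eqref{def:FV} requires. Nothing in your proposal covers $\Phi_k$ with $k\ge3$, and extending Proposition~\ref{prop:main} to such $\phi$ would need duality/meeting-time control for three or more chains, which the paper deliberately avoids. The paper closes this gap differently: it proves the $\Phi_1$ Dynkin martingales converge together with their predictable quadratic variations \eqref{mg-FV-qv} (this is precisely where the $\Phi_2$ comparison of Lemma~\ref{lem:qvbnd} is used), concludes that any limit is a continuous $\ms P(S)$-valued process whose linear functionals are continuous semimartingales with the Fleming--Viot drift and covariance, and then bootstraps by integration by parts for continuous semimartingales to obtain the martingale problem over the whole algebra $\Phi$ (following Perkins). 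You need either that bootstrap or a genuinely new estimate for higher-order $\phi$; without one of them the appeal to well-posedness does not go through.
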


Let us set up some notation to facilitate the proof of Theorem~\ref{thmm:1}. We write $\F_t^{(n)}=\sigma\big(\xi_{\gamma_ns};s\leq t\big)$ for $t\in \R_+$, so that $\big(\F_t^{(n)}\big)$ is the natural filtration of the time-changed process $(\xi_{\gamma_n t})$. We decompose $\phi(X^{(n)})$, 
for any $\phi\in \Phi$, into 
\begin{linenomath}
\begin{align}\label{dec}
\phi(X^{(n)}_t)=\phi\big(X^{(n)}_0\big)+\phi\big(A^{(n)}_t\big)+\phi\big(M^{(n)}_t\big),
\end{align}
\end{linenomath}
where $A^{(n)}$ is defined by
\begin{linenomath}
\begin{align}\label{def:An}
\phi\big(A^{(n)}_t\big)\doteq \gamma_n\int_0^t L^{\mu_n}_\VM\phi\circ \mathbf m(\xi_{\gamma_n r})dr
\end{align}
\end{linenomath}
and $M^{(n)}$ is by (\ref{dec}).
Note that $\phi(M^{(n)})$ is a martingale by the definition of $\phi(A^{(n)})$ and the boundedness of $\phi$.

The following lemma is the first step of the proof of
Theorem~\ref{thmm:1}. It identifies the functional form of the quadratic variation for the empirical measure of a voter model in the limit of large $E_n$.

\begin{lem}\label{lem:qvbnd}
Under the assumptions of Theorem~\ref{thmm:1}, we have, for
any $\phi_1(\lambda)=\langle
f_1,\lambda\rangle,\phi_2(\lambda)=\langle
f_2,\lambda\rangle\in \Phi_1$ for $f_1,f_2\in \C(S)$ and
$t\in (0,\infty)$,
\begin{linenomath}
\begin{align}
&\lim_{n\to\infty}\En\left[\,\left|\int_0^t \gamma_nL^{\mu_n}_{\VM}\phi_1\circ \mathbf m(\xi_{\gamma_nr})dr-\int_0^t L_{\sf FV}^{\mu}\phi_1(X^{(n)}_r)dr
\right|\,\right]= 0,\label{eq:conv1-int}\\
&\lim_{n\to\infty}\En\left[\,\left|
\int_0^t\gamma_n L_{\sf VM}^{\mu_n}\phi_1\phi_2\circ \mathbf m(\xi_{\gamma_nr})dr-
\int_0^t L_{\sf FV}^{\mu}\phi_1\phi_2(X^{(n)}_{r})dr
\right|\,\right]= 0,\label{eq:conv2-int}
\end{align}
where $X^{(n)}$ under $\E^{(n)}$ has initial condition $X^{(n)}_0$ satisfying condition (iii) of Theorem~\ref{thmm:1}.
\end{linenomath}
\end{lem}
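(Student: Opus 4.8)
The plan is to deduce both limits from Proposition~\ref{prop:main} and Proposition~\ref{prop:genbdd} by splitting the time integral at a well-chosen cutoff, and from the convergence (\ref{Mapp-VV}) for the time-rescaled chains. First I would record the consequence of assumptions (i)--(iv) for the rescaled coalescing system: if $(V^{(n)},V'^{(n)})$ is the random pair from Section~\ref{sec:coal} associated with $q^{(n)}$, then $\gamma_n^{-1}M_{V^{(n)},V'^{(n)}}$ has an exponential limit and, crucially, $2\pid^{(n)}\bfP\big(M_{V^{(n)},V'^{(n)}}>\gamma_n s\big)\to e^{-?}$... more precisely the normalization is chosen (via $\gamma_n=$ expected meeting time from stationarity together with $\pid^{(n)}\to 0$) so that $2\pid^{(n)}\bfP(M_{V,V'}>\gamma_n s)\to 1$ as first $n\to\infty$ and then $s\downarrow 0$; this is exactly the content of the principle of Keilson/Aldous invoked in the introduction, and under (iv-1) or (iv-2) it follows from the mixing/spectral-gap hypothesis in the standard way (this is the step that reuses the machinery behind \cite[Proposition~6.1]{CCC}). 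I would also note $\gamma_n\mu_n(\1)\to\mu(\1)$ from (ii).

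Next, for fixed $t$ I would write $\int_0^t\gamma_n L^{\mu_n}_{\VM}\phi\circ\mathbf m(\xi_{\gamma_n r})\,dr = \int_0^\delta + \int_\delta^t$ for a small cutoff $\delta>0$. On $[0,\delta]$ I bound the integrand using Proposition~\ref{prop:genbdd} (after rescaling time by $\gamma_n$): $\gamma_n\E^{(n)}_\xi|L^{\mu_n}_{\VM}\hphi(\xi_{\gamma_n r})|\le C_\phi\gamma_n\pid^{(n)}\P(M_{V,V'}>\gamma_n r)+C_\phi\gamma_n\pid^{(n)}\mu_n(\1)\int_0^{\gamma_n r}\P(M_{V,V'}>u)\,du$, and the change of variables $u=\gamma_n v$ turns both terms into quantities of the form $\pid^{(n)}\gamma_n\times(\text{bounded})$; using $\gamma_n\pid^{(n)}\P(M_{V,V'}>\gamma_n r)$ is uniformly integrable in $r$ with a limit that is integrable near $0$, the $[0,\delta]$ contribution is $O(\delta)$ uniformly in $n$, and likewise for the Fleming--Viot side $\int_0^\delta L^\mu_{\FV}\phi(X^{(n)}_r)\,dr=O(\delta)$ since $\phi$ and hence $L^\mu_{\FV}\phi$ are bounded on $\ms P(S)$. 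On $[\delta,t]$ I apply Proposition~\ref{prop:main} with $s$ chosen as an intermediate scale $s_n$ with $s_n/\gamma_n\to 0$ but $s_n\to\infty$ (equivalently, apply the proposition at $(\gamma_n s, \gamma_n r)$ with $s$ small and fixed, then let $s\downarrow0$ at the end): the left-hand side of (\ref{eq:main0})/(\ref{eq:main1}) controls $\sup_\xi|\gamma_n\E_\xi[L_{\VM}\hphi(\xi_{\gamma_n r})]-2\pid^{(n)}\bfP(M_{V,V'}>\gamma_n s)\gamma_n\cdot L_{\FV}\phi(\mathbf m(\xi))|$ — wait, I need the $\gamma_n$ factor on the Fleming--Viot term to be absorbed, so in fact I multiply through by $\gamma_n$ and use that $2\pid^{(n)}\bfP(M_{V,V'}>\gamma_n s)\to1$ makes the second term converge to $L_{\FV}\phi(\mathbf m(\xi))$ up to an error that vanishes with $n$ and then with $s$. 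The four error terms on the right of (\ref{eq:main0}) (resp.\ (\ref{eq:main1})), each multiplied by $\gamma_n$ and integrated over $r\in[\delta,t]$, are handled termwise: $\gamma_n\pid^{(n)}\P(M_{V,V}\in(\gamma_n s,\gamma_n t])\to0$ because the limiting law has no atom and $\gamma_n\pid^{(n)}$ stays bounded (here one also uses the diagonal $M_{V,V}=0$ carefully — actually $M_{V,V}\equiv0$, so this term is literally $0$); the mutation terms are $O(\gamma_n\mu_n(\1))=O(1)$ times probabilities that are integrable and small; and the last term is where (iv-1)/(iv-2) enter, giving $d_{E_n}(\gamma_n(t-s)-\ldots)\to0$ or $\pinmax^{(n)} e^{-\mathbf g_n(\cdots)}\to0$ by the mixing-time/spectral-gap hypothesis. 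Finally I replace $2\pid^{(n)}\bfP(M_{V,V'}>\gamma_n s)L_{\FV}\phi(\mathbf m(\xi_{\gamma_n r}))$ by $L_{\FV}\phi(X^{(n)}_r)$, incurring an error $|2\pid^{(n)}\bfP(M_{V,V'}>\gamma_n s)-1|\cdot C_\phi$ that vanishes (first $n$, then $s$), and add back the mutation part of $L^\mu_{\FV}$, which comes from the $k=1$ case of Corollary~\ref{cor:gen12} together with $\gamma_n\mu_n\Rightarrow\mu$ and the continuity of $f_i$ (weak convergence of $\gamma_n\mu_n$ tested against the continuous functions $f_i$, $f_if_j$, recalling $S$ is compact). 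Taking $n\to\infty$ then $\delta\downarrow0$ and $s\downarrow0$ yields (\ref{eq:conv1-int}) for $\phi_1\in\Phi_1$ and (\ref{eq:conv2-int}) for $\phi_1\phi_2\in\Phi_2$; the $\phi_1$-only case is in fact immediate since $L_{\VM}\phi_1\circ\mathbf m\equiv0$ by Corollary~\ref{cor:gen12}(1), so only the mutation term and the weak convergence $\gamma_n\mu_n\Rightarrow\mu$ are needed there.

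The main obstacle I anticipate is the bookkeeping around the double limit in $n$ and in the auxiliary scale $s$ (and the cutoff $\delta$): one must make sure the error terms in Proposition~\ref{prop:main} that carry a factor $\gamma_n$ in front — after integrating $\gamma_n L_{\VM}\hphi$ over time — are genuinely $o(1)$ uniformly in $\xi$, which requires knowing not just $2\pid^{(n)}\bfP(M_{V,V'}>\gamma_n s)\to1$ but also quantitative control of $\gamma_n\pid^{(n)}\P(M_{V,V'}>\gamma_n r)$ as a function of $r$ (uniform integrability on compacts and smallness near $0$). That control is precisely the ``coming down from stationarity before meeting'' estimate that conditions (iv-1) and (iv-2) are designed to provide, and verifying it cleanly — separating the contribution of the mixing phase from the meeting phase — is the technical heart of the argument, exactly parallel to the corresponding step in \cite{CCC}. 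Everything else (boundedness of $\phi$ and $L^\mu_{\FV}\phi$ on the compact space $\ms P(S)$, weak convergence of $\gamma_n\mu_n$ tested against continuous functions, and the termwise estimates above) is routine.
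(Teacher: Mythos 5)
Your Step for \eqref{eq:conv1-int} (using $L_{\VM}\phi_1\circ\mathbf m\equiv 0$ and condition (ii)) and your treatment of the mutation contribution to \eqref{eq:conv2-int} match the paper. The gap is in the voting part of \eqref{eq:conv2-int}, at the point where you pass from Proposition~\ref{prop:main} to the convergence of the time integrals. First, Proposition~\ref{prop:main} cannot be applied the way you propose, with the later time equal to $\gamma_n r$ for $r$ in a macroscopic window $[\delta,t]$: the first error term is then $C_\phi\pid^{(n)}\P^{(n)}\big(M_{V,V'}\in(s,\gamma_n r]\big)$, and after multiplying by $\gamma_n$ this converges (by the exponential limit law behind \eqref{eq:exptail}) to a strictly positive constant of order $1-e^{-r}$, whether you take $s$ fixed macroscopic or $s=s_n=o(\gamma_n)$; it is not $o(1)$. (Your aside that this term ``is literally $0$'' rests on reading $M_{V,V}$ literally, which is a typo in \eqref{eq:main0} for $M_{V,V'}$, as the derivation via \eqref{bdd:vep0} shows.) The estimate is only useful when \emph{both} cutoffs are mesoscopic, e.g.\ $s=s_n'$, $t=2s_n'$ with $s_n'\to\infty$, $s_n'/\gamma_n\to 0$, which yields \eqref{eq:keylem}: a bound on $\sup_\xi\big|\gamma_n\E_\xi[L_{\VM}\phi_1\phi_2\circ\mathbf m(\xi_{2s_n'})]-L_{\FV}\phi_1\phi_2(\mathbf m(\xi))\big|$, i.e.\ control of the generator only a mesoscopic time ahead of the conditioning configuration.

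Second, and more fundamentally, even granting such control you still have to convert it into
$\E^{(n)}\big|\int_{2\delta_n}^t(\gamma_nL_{\VM}\phi_1\phi_2\circ\mathbf m(\xi_{\gamma_n r})-L_{\FV}\phi_1\phi_2(X^{(n)}_r))dr\big|\to 0$. The integrand does \emph{not} converge pointwise: $\gamma_nL_{\VM}\phi_1\phi_2\circ\mathbf m(\xi_{\gamma_n r})$ has order-one fluctuations around $L_{\FV}\phi_1\phi_2(X^{(n)}_r)$, and only its conditional expectation given $\F^{(n)}_{r-2\delta_n}$ is close (by the Markov property and \eqref{eq:keylem}) to $L_{\FV}\phi_1\phi_2\big(X^{(n)}_{r-2\delta_n}\big)$. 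Your proposal has no mechanism for the remaining fluctuation. The paper handles it by writing $\gamma_nL_{\VM}\phi_1\phi_2\circ\mathbf m(\xi_{\gamma_n s})=H_n(s)+\E^{(n)}_\xi[\,\cdot\mid\F^{(n)}_{s-2\delta_n}]$ and showing $\E^{(n)}_\xi\big[(\int_{2\delta_n}^tH_n(s)ds)^2\big]\to 0$ using the martingale-difference property $\E^{(n)}_\xi[H_n(s)H_n(r)]=0$ for $s-r>2\delta_n$ together with the uniform bounds \eqref{group}; this decorrelation/second-moment step (inherited from \cite[Theorem~2.2]{CCC}) is the technical heart of the lemma and is missing from your argument. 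Without it, bounding the absolute value of the integral by the integral of absolute values, or by suprema of the type you invoke, does not give a quantity that tends to zero.
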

\begin{proof}
We fix $t\in (0,\infty)$ throughout the following and divide the proof into Step~\ref{s:1}--\ref{s:3}. We prove
(\ref{eq:conv1-int}) in Step~\ref{s:1} and (\ref{eq:conv2-int}) in Step~\ref{s:3}.

\begin{step}\label{s:1} To see (\ref{eq:conv1-int}), we note
  that by Corollary~\ref{cor:gen12} (1) and
  (\ref{eq:FVgen2}),
\begin{linenomath}
\begin{align*}
\int_0^t \gamma_nL^{\mu_n}_{\VM}\phi_1\circ \mathbf m(\xi_{\gamma_nr})dr-\int_0^t L_{\sf FV}^{\mu}\phi_1(X^{(n)}_r)dr=\int_0^t \big\langle \left(\gamma_n A_{\mu_n}-A_\mu\right)f_1,X^{(n)}_r\big\rangle dr.
\end{align*}
\end{linenomath}
The required convergence follows upon applying condition (ii) of Theorem~\ref{thmm:1} to the right-hand side of
the foregoing equality.   
\end{step}

\begin{step}\label{s:2}
We will show that it is possible to choose a sequence $(s'_n)$ such that $s_n'\nearrow \infty$, $s'_n=o(\gamma_n)$
and
\begin{linenomath}
\begin{equation}\label{eq:keylem}
\vep_n \equiv 
\sup_{\xi\in S^{E_n}}\Big| \En_\xi\left[
\gamma_nL_{\sf VM} \phi_1\phi_2\circ \mathbf m\big(\xin_{\gamma_n\cdot 2\delta_n}\big)\right] -
L_\FV\phi_1\phi_2\big(\mathbf m(\xi)\big)
\Big|\xrightarrow[n\to\infty]{} 0
\end{equation}
\end{linenomath}
for
$\delta_n=s'_n/\gamma_n$. As in the proof of the
  analogous fact given in \cite[Lemma~6.2]{CCC}, a key role
  is played by the following limiting properties of the
  meeting time laws $\P^{(n)}(M_{V,V'}\in
  \,\cdot\,)$. If the conditions (i) and either of (iv-1) or
  (iv-2) in Theorem~\ref{thmm:1} hold, then 
\begin{linenomath}
\begin{align}\label{eq:exptail}
\lim_{n\to\infty}2\gamma_n\pid^{(n)}\int_0^t \P^{(n)}(M_{V,V'}>\gamma_nr)dr=1-e^{-t}\quad \forall\;t\geq 0
\end{align}
\end{linenomath}
(see \cite[Theorem~4.1]{CCC} and the proof of
\cite[(6.13)]{CCC}). Furthermore, the argument proving
\cite[6.14]{CCC}, which assumes only conditions (i) and
(iv-1) of Theorem~\ref{thmm:1}, shows how to obtain a sequence $(s_n')$ such that
$\delta_n=s_n'/\gamma_n\to 0$, $s'_n/\tmixn\to\infty$ so that
\[
d_{E_n}(s'_n)\le \exp(-\lfloor s'_n/\tmixn\rfloor)\xrightarrow[n\to\infty]{} 0 
\]
(see \cite[Section~4.5]{Levin_2008} for
  this inequality) and also
\begin{linenomath}\begin{equation}\label{eq:snprimelimits}
2\gamma_n\pid^{(n)}\P^{(n)}(M_{V,V'}>s_n')\xrightarrow[n\to\infty]{} 1\quad \&\quad  
2\gamma_n\pid^{(n)}\P^{(n)}\big(M_{V,V'}\in(s_n',2s_n']\big)\xrightarrow[n\to\infty]{} 0.
\end{equation}
\end{linenomath}

To see our claim (\ref{eq:keylem}), we  set $s=s_n'$ and $t=2s_n'$, and use the bound in
(\ref{eq:main0}). After rearrangements, we have
\begin{linenomath}
\begin{align*}
&\hspace{.2cm}\vep_n \leq  
C_{\phi_1\phi_2}\big|2\gamma_n\pid^{(n)}\P^{(n)}(M_{V,V'}>s_n')-1\big|\\
&+
C_{\phi_1\phi_2}\gamma_n 
\pid^{(n)}\P^{(n)}\big(M_{V,V}\in (s_n',2s_n']\big)\\
&+
C_{\phi_1\phi_2} \gamma_n \pid^{(n)}
  \P^{(n)}(M_{V,V'}>s_n')d_{E_n}(s_n')
\\
&+C_{\phi_1\phi_2}
  \frac{1-e^{-\mu_n(\1) \cdot 2s_n'}}{\mu_n(\1)
  s_n'}
\gamma_n\pid^{(n)} \P^{(n)}\big(M_{V,V}>2s_n'\big)
\cdot\delta_n \gamma_n \mu_n(\1)
\\ 
&+C_{\phi_1\phi_2}\gamma_n\mu_n(\1)\cdot 
\gamma_n\pid^{(n)}\int_0^{2\delta_n} \P^{(n)}(M_{V,V'}>\gamma_n r)dr.
\end{align*}
\end{linenomath}
Here, the first three terms tend to 0 by \eqref{eq:snprimelimits}
and $d_{E_n}(s'_n)\to 0$. The fourth term tends to 0 by
\eqref{eq:snprimelimits}, condition (ii) of Theorem~\ref{thmm:1}, and
$\delta_n\to 0$.  The fifth term tends to 0 by condition (ii) of Theorem~\ref{thmm:1}, \eqref{eq:exptail}, and $\delta_n\to 0$.
Replacing (iv-1) of Theorem~\ref{thmm:1} with (iv-2) in the same theorem and using
  \eqref{eq:main1} instead of \eqref{eq:main0}, the proof of
  (\ref{eq:keylem}) follows from a similar
argument (see the second part of the proof of \cite[Lemma~6.2]{CCC}, especially (6.17) there).
\end{step}

\begin{step}\label{s:3}
We prove
  (\ref{eq:conv2-int}) in this step. Recall the definitions
  (\ref{eq:vmgen}) of $L^{\mu_n}_{\sf VM}=L_{\sf VM}+ L_{\mu_n}$ and 
  (\ref{eq:FVgen2}) of $L^\mu_{\sf FV}=L_{\sf
      FV}+L_{\mu}$. We handle the mutation
  contributions to 
   (\ref{eq:conv2-int}) first; more precisely,
  we will show that
\begin{linenomath} 
\begin{equation}\label{mut-gen}
\lim_{n\to\infty}\En\Bigg[\,\Bigg|
\int_0^t\gamma_n L_{\mu_n}\phi_1\phi_2\circ \mathbf
m(\xi_{\gamma_nr})dr
-\int_0^t L_{\mu_n}\phi_1\phi_2(X^{(n)}_r)dr\Bigg|\,\Bigg]= 0.
\end{equation}
\end{linenomath}
The foregoing equality can be obtained by an approximation
of the difference of the integrands pointwise in
$\xi$. Indeed by Corollary~\ref{cor:gen12} (2), for any $\xi\in S^{E_n}$ this difference is
\begin{linenomath}
\begin{align*}
&\gamma_n L_{\mu_n}\phi_1\phi_2\circ \mathbf m(\xi) - \sum_{i=1}^2\langle A_{\mu_n}f_i,\mathbf m(\xi)\rangle\prod_{\ell:\ell\neq i}\langle f_\ell,\mathbf m(\xi)\rangle\\
=&\sum_{i=1}^2\langle (\gamma_n A_{\mu_n}-A_\mu)f_i,\mathbf m(\xi)\rangle\prod_{\ell:\ell\neq i}\langle f_\ell,\mathbf m(\xi)\rangle\\
&+\pid^{(n)}\langle f_1 f_2,\gamma_n\mu_n\rangle-\langle f_1,\gamma_n\mu_n\rangle \sum_{x\in E_n}f_2\circ \xi(x)\pi^{(n)}(x)^2\\
&-\sum_{x\in E_n}f_1\circ\xi(x)\pi^{(n)}(x)^2\langle f_2,\gamma_n\mu_n\rangle 
+\gamma_n\mu_n(\1)\sum_{x\in E_n}f_1f_2\circ \xi(x)\pi^{(n)}(x)^2.
\end{align*}
\end{linenomath}
The last equality is enough to obtain (\ref{mut-gen}) by conditions (i) and (ii) of Theorem~\ref{thmm:1}. 

In the rest of Step~\ref{s:3}, we handle the voting mechanism behind (\ref{eq:conv2-int}) and show that
\begin{linenomath}
\begin{align}\label{approxgen-1}
\lim_{n\to\infty}\En\left[\,\left| \int_0^t \gamma_n L_{\sf VM}\phi_1\phi_2\circ \mathbf m(\xi_{\gamma_n r})dr -
\int_0^t L_{\sf FV}\phi_1\phi_2(X^{(n)}_r)dr
\right|\,\right]= 0.
\end{align}
\end{linenomath}
Notice that the foregoing equality and (\ref{mut-gen}) thus give (\ref{eq:conv2-int}) by  (\ref{eq:vmgen})  and (\ref{eq:FVgen2}). 
To obtain (\ref{approxgen-1}), we first recall the sequence $\delta_n=s_n'/\gamma_n\to 0$ for $(s_n')$ chosen in Step~\ref{s:2} and use Proposition~\ref{prop:genbdd} to get
\begin{linenomath} 
\begin{align}
&\E^{(n)}\left[\,\left|\int_0^{2\delta_n}\gamma_nL_{\sf VM}\phi_1\phi_2\circ \mathbf m(\xi_{\gamma_n r})dr -
\int_0^{2\delta_n} L_{\sf FV}\phi_1\phi_2(X^{(n)}_r)dr
\right| \, \right]\notag\\
\begin{split}
&\leq C_{\phi_1\phi_2}\Big(\gamma_n\pid^{(n)}+\gamma_n^2\pid^{(n)}\mu_n(\1)\delta_n\Big)\int_0^{2\delta_n}\P^{(n)}(M_{V,V'}>\gamma_n r)dr+C_{\phi_1\phi_2}\delta_n,\label{diff1}
\end{split}
\end{align}
\end{linenomath}
which tends to zero as $n\to\infty$ by the validity of (\ref{eq:exptail}) and condition (ii) of Theorem~\ref{thmm:1}.

With \eqref{diff1} in hand, we complete the proof
  of \eqref{approxgen-1} by proving
\begin{linenomath}
\begin{align}
\begin{split}
&\lim_{n\to\infty}\E^{(n)}\Bigg[\,\Bigg| \int_{2\delta_n}^t \gamma_nL_{\sf VM}\phi_1\phi_2\circ\mathbf m(\xi_{\gamma_n r})dr -
\int_{2\delta_n}^t L_{\sf FV}\phi_1\phi_2(X^{(n)}_r)dr\Bigg|\,\Bigg]=0.\label{diff2}
\end{split}
\end{align}
\end{linenomath}
To get the foregoing limit, we 
will need the estimates:
\begin{linenomath}
\begin{align}
\begin{split}
&\limsup_{n\to\infty}\sup_{\xi\in S^{E_n}}\gamma_n\E^{(n)}_\xi\big[\big|L_{\VM}\phi_1\phi_2 \circ \mathbf m(\xi_{2s_n'})\big|\big]<\infty,\\
&\sup_{n\in \Bbb N}\sup_{\xi\in S^{E_n}}\int_0^t \gamma_n\E^{(n)}_\xi\big[\big|L_{\VM}\phi_1\phi_2\circ \mathbf m(\xi_{\gamma_n r})\big|\big]dr<\infty,\\
&\lim_{n\to\infty}\sup_{\xi\in S^{E_n}}\int_0^{2\delta_n}\gamma_n\E^{(n)}_\xi\big[\big|L_{\VM}\phi_1\phi_2 \circ \mathbf m(\xi_{\gamma_n r})\big|\big]dr=0,
\end{split}\label{group}
\end{align}
\end{linenomath}
which follow from the martingale-difference
argument employed in the proof of
\cite[Theorem~2.2]{CCC}.
The first of these estimates follows from \eqref{eq:keylem} and the
fact the $L_{\sf FV}\phi_1\phi_2$ is bounded, and the last two follow
from Proposition~\ref{prop:genbdd}, (\ref{eq:exptail}),
condition (ii) of Theorem~\ref{thmm:1}, and $\delta_n\to 0$.

Fix $n$ and $\xi\in S^{E_n}$. Now if we define
\begin{linenomath}
\begin{equation*}
H_n(s) = \gamma_n L_{\sf VM}\phi_1\phi_2\circ \mathbf
m(\xi_{\gamma_n s}) - 
\E^{(n)}_\xi\big[\gamma_n L_{\sf VM}\phi_1\phi_2\circ \mathbf
m(\xi_{\gamma_n s})\big| \F_{s-2\delta_n}^{(n)}\big],
\qquad 
s\ge
2\delta_n ,
\end{equation*}
\end{linenomath}
then the expectation in \eqref{diff2} is bounded
  above by
\begin{linenomath}
\begin{align*}
\E^{(n)}_\xi\left[\left( \int_{2\delta_n}^t H_n(s) ds\right)^2\right]^{1/2}
&+
\E^{(n)}_\xi\left[ \int_{2\delta_n}^t\left|
\E^{(n)}_\xi\big[\gamma_n L_{\sf VM}\phi_1\phi_2\circ \mathbf
m(\xi_{\gamma_n s})\big| \F_{s-2\delta_n}^{(n)}\big]
- L_{\sf
  FV}\phi_1\phi_2\big(X^{(n)}_{s-2\delta_n}\big)\right|ds\right]\\
&+ \E^{(n)}_\xi\left[\left| \int_{2\delta_n}^t
\Big(L_{\sf  FV}\phi_1\phi_2\big(X^{(n)}_{s-2\delta_n}\big)
-L_{\sf  FV}\phi_1\phi_2\big(X^{(n)}_{s}\big)\Big)ds\right|\right].
\end{align*}
\end{linenomath}
Each of these terms above tends to 0 for the following reasons. The third term tends
  to 0 because $L_{\sf FV}\phi_1\phi_2$ is bounded and $\delta_n\to
  0$. The second term tend to 0 by the Markov property and
  \eqref{eq:keylem}. The proof that the first term tends to
0 starts with the observation that since
$\E^{(n)}_\xi[H_n(s)H_n(r)]=0$ whenever $s-r>2\delta_n$, we have 
\begin{linenomath}
\begin{equation*}
\E^{(n)}_\xi\left[\left( \int_{2\delta_n}^t H_n(s)ds \right)^2\right]
= 2\int_{2\delta_n\le r\le s\le
  t}\E^{(n)}_\xi\big[H_n(s)H_n(r)\1_{\{r>s-2\delta_n\}} \big]dsdr .
\end{equation*}
\end{linenomath}
The argument showing the last integral tends to 0 is
straightforward given the estimates in (\ref{group}) above, but is somewhat
lengthy. Since it is essentially the same argument
starting at \cite[(6.20)]{CCC}, we refer the reader there
for details.

Hence, (\ref{approxgen-1}) holds by (\ref{diff1}) and (\ref{diff2}). The proof is complete.
\end{step}
\vspace{-.8cm}
\end{proof}

The following lemma is our second step toward the proof of Theorem~\ref{thmm:1} and obtains the necessary $C$-tightness of the empirical measures under consideration for convergence to a Fleming-Viot process.

\begin{lem}\label{lem:CX}
Under the assumptions of Theorem~\ref{thmm:1}, the sequence of laws of $\{X^{(n)}\}$ is $C$-tight as probability measures on $D\big(\R_+,\ms P(S)\big)$. 
\end{lem}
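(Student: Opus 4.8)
The plan is to establish $C$-tightness of $\{X^{(n)}\}$ in $D(\R_+,\ms P(S))$ by verifying a compact containment condition together with a modulus-of-continuity bound, using the decomposition \eqref{dec} into the initial value, the bounded-variation part $\phi(A^{(n)})$, and the martingale part $\phi(M^{(n)})$. Since $\ms P(S)$ is compact (as $S$ is a compact metric space), compact containment in the state space is automatic; what remains is to control the oscillations of the paths. The standard route is to apply a Jakubowski-type criterion: it suffices to show that for each $\phi$ in a separating-point, convergence-determining family of test functions — here $\Phi$, since products $\prod_i\langle f_i,\lambda\rangle$ for $f_i\in\C(S)$ separate points of $\ms P(S)$ and are closed under multiplication — the real-valued processes $\phi(X^{(n)})$ are $C$-tight in $D(\R_+,\R)$. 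It actually suffices to check this for $\phi\in\Phi_1$ and $\phi\in\Phi_2$, because tightness of $\langle f,X^{(n)}\rangle$ for all $f\in\C(S)$ already yields tightness of $X^{(n)}$ in $D(\R_+,\ms P(S))$ with the weak topology, and the quadratic-variation control needed for the $C$-property (no jumps in the limit) involves products of two such linear functionals.

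First I would handle the bounded-variation parts $\phi(A^{(n)}_t)=\gamma_n\int_0^t L^{\mu_n}_{\VM}\phi\circ\mathbf m(\xi_{\gamma_n r})\,dr$ for $\phi\in\Phi_1\cup\Phi_2$. For $\phi\in\Phi_1$, Corollary~\ref{cor:gen12}(1) gives $\gamma_n L^{\mu_n}_{\VM}\phi\circ\mathbf m(\xi)=\langle\gamma_n A_{\mu_n}f_1,\mathbf m(\xi)\rangle$, which is uniformly bounded by condition (ii) of Theorem~\ref{thmm:1} (since $\gamma_n\mu_n$ converges weakly, hence $\gamma_n\mu_n(\1)$ is bounded), so $t\mapsto\phi(A^{(n)}_t)$ is uniformly Lipschitz and trivially $C$-tight. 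For $\phi\in\Phi_2$, I would use Proposition~\ref{prop:genbdd}: taking expectations and integrating, $\E^{(n)}_\xi[|L_{\VM}\widehat\phi(\xi_t)|]\le C_\phi\pid\P(M_{V,V'}>t)+C_\phi\pid\mu(\1)\int_0^t\P(M_{V,V'}>r)dr$, and multiplying by $\gamma_n$ and integrating over $[0,t]$, the limit \eqref{eq:exptail} together with condition (ii) shows $\sup_n\sup_\xi\int_0^t\gamma_n\E^{(n)}_\xi[|L_{\VM}\widehat\phi(\xi_{\gamma_n r})|]dr<\infty$; combined with the mutation contribution (bounded as in Step~\ref{s:3} of Lemma~\ref{lem:qvbnd} via conditions (i),(ii)), this gives a uniform $L^1$-bound on the total variation of $\phi(A^{(n)})$ on compacts. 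Uniform total-variation bounds on $[0,T]$ plus the Lipschitz/integrability estimates above yield the Aldous-type modulus condition for the $A^{(n)}$ part, and control of oscillations on small intervals follows from the last line of \eqref{group} (the analogue of which for general $\phi\in\Phi_2$ follows from Proposition~\ref{prop:genbdd} and \eqref{eq:exptail}).

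Next I would handle the martingale parts $\phi(M^{(n)})$. These are martingales with bounded jumps: the jump of $\phi(X^{(n)})$ at a voting or mutation event is $O(\pinmax)=O(\sqrt{\pid^{(n)}})\to 0$ by condition (i) (using $\pinmax\le(\pid^{(n)})^{1/2}$ since $\pid^{(n)}=\sum_x\pi^{(n)}(x)^2\ge\pinmax\cdot\pinmax$... more precisely $\pinmax^2\le\pid^{(n)}$), so in the limit the martingale parts have no jumps. For tightness I would control the predictable quadratic variation $\langle\phi(M^{(n)})\rangle_t$. By the standard identity for jump-Markov-process martingales, the increment of this quadratic variation over $dt$ is $\gamma_n\big(L^{\mu_n}_{\VM}(\phi\circ\mathbf m)^2-2(\phi\circ\mathbf m)L^{\mu_n}_{\VM}(\phi\circ\mathbf m)\big)(\xi_{\gamma_n t})\,dt$, which — since $(\phi\circ\mathbf m)^2$ and $\phi\circ\mathbf m$ again lie in the span of functions of type $\Phi$ — is dominated, up to a constant $C_\phi$, by the $\Phi_2$-type bound $\gamma_n\cdot C_\phi\pid^{(n)}\sum_{x,y}\pi^{(n)}(x)^2 q^{(n)}(x,y)\1_{\{\xi(x)\ne\xi(y)\}}$ plus a mutation term of order $\gamma_n\pid^{(n)}\mu_n(\1)$; taking expectations and using Proposition~\ref{prop:genbdd} and \eqref{eq:exptail} again gives $\sup_n\E^{(n)}[\langle\phi(M^{(n)})\rangle_T]<\infty$ with a continuous (indeed absolutely continuous, uniformly) time-dependence. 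This, with the vanishing jumps, yields $C$-tightness of each $\phi(M^{(n)})$ by the Aldous criterion (or by Rebolledo's theorem). Assembling the three pieces of \eqref{dec} — $\phi(X^{(n)}_0)$ tight by condition (iii), $\phi(A^{(n)})$ $C$-tight, $\phi(M^{(n)})$ $C$-tight — gives $C$-tightness of $\phi(X^{(n)})$ for every $\phi\in\Phi_1\cup\Phi_2$, hence $C$-tightness of $\{X^{(n)}\}$ in $D(\R_+,\ms P(S))$.

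\textbf{Main obstacle.} The delicate point is passing from $C$-tightness of the real-valued coordinate processes $\langle f,X^{(n)}\rangle$ (and their products) to $C$-tightness of the $\ms P(S)$-valued processes, and in particular ruling out jumps in the limit uniformly in $f$: one must check that a single modulus-of-continuity estimate works simultaneously for a convergence-determining countable family $\{f_k\}\subset\C(S)$, and that the Prohorov-metric oscillation is controlled by the sup over this family. The technical heart is thus the uniform (in $\xi$ and $n$) bound on the expected quadratic variation of the martingale parts coming from Proposition~\ref{prop:genbdd} combined with the exponential-tail asymptotics \eqref{eq:exptail}; once these give $\sup_n\E^{(n)}[\langle\phi(M^{(n)})\rangle_T]<\infty$ with a uniformly continuous compensator, the rest is a routine application of standard tightness criteria for càdlàg semimartingales together with the compactness of $\ms P(S)$.
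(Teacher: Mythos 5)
Your proposal is correct and follows essentially the same route as the paper: the semimartingale decomposition \eqref{dec}, control of the drift and of the predictable bracket \eqref{mg-FV-qv} via Proposition~\ref{prop:genbdd} together with \eqref{eq:exptail} and condition (ii), Aldous's criterion, vanishing jumps of order $\pi^{(n)}_{\max}\to 0$ by condition (i), and Jakubowski's theorem to pass from the real-valued functionals to $C$-tightness in $D\big(\R_+,\ms P(S)\big)$. The only minor differences (using the Lipschitz bound from Corollary~\ref{cor:gen12}(1) for the $\Phi_1$ drift, treating $\Phi_2$ test functions separately, and citing Rebolledo/Aldous instead of the Jacod--Shiryaev results) are cosmetic and do not change the argument.
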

\begin{proof}
In this proof, we show that for $\phi\in \Phi_1$ with $\phi(\lambda)=\langle f,\lambda\rangle$ for $f\in \C(S)$, both the sequences of laws of $\{\phi\big(A^{(n)}\big)\}$ and $\{\phi\big(M^{(n)}\big)\}$ are $C$-tight as probability measures on $D(\R_+,\R)$. The $C$-tightness of these sequences will be enough for the present lemma by Jakubowski's theorem (e.g. \cite[Theorem II.4.1]{P:DW}), since the type space $S$ is assumed to be compact and condition (iii) of Theorem~\ref{thmm:1} is in force.

We make two observations in order to prove the $C$-tightness of the sequence of laws of $\{\phi(A^{(n)})\}$. First, for every $0\leq s<t<\infty$,
\begin{linenomath}
\begin{align}
\big|\phi\big(A^{(n)}_t\big)-\phi\big(A^{(n)}_s\big)\big|
\leq &\gamma_n\int_s^t|L_\VM \phi\circ \mathbf m(\xi_{\gamma_nr})|dr+C_\phi \gamma_n\mu_n(\1)(t-s)\label{Aest}
\end{align}
\end{linenomath}
by the definition (\ref{eq:vmgen}) of $L^{\mu_n}_\VM$. Next,
we observe that (\ref{eq:gen}) gives
\begin{linenomath}
\begin{align}
\begin{split}
\sup_{\xi\in S^{E_n}}\gamma_n\int_0^t\E^{(n)}_\xi[|L_\VM\phi\circ \mathbf m(\xi_{\gamma_nr})|]dr
\leq &C_\phi \gamma_n\pid^{(n)}\int_0^t \P(M_{V,V'}>\gamma_nr)dr\\
&\hspace{-1.5cm}+
C_\phi\gamma_n^2\pid^{(n)}\mu(\1)t\int_0^t \P^{(n)}(M_{V,V'}>\gamma_nr)dr.\label{Aest1}
\end{split}
\end{align}
\end{linenomath}
Then it is readily checked by the Markov property of voter models that for all $K\in \Bbb N$,
\begin{linenomath}
\begin{align}\label{eq:Aldous}
\lim_{\theta \to 0}\limsup_{n\to\infty}\sup_{S,T:S\leq T\leq S+\theta}\E^{(n)}\left[\big|\phi\big(A^{(n)}_T\big)-\phi\big(A^{(n)}_S\big)\big|\right]=0,
\end{align}
\end{linenomath}
where $S,T$ range over all $(\F^{(n)}_{t})$-stopping times bounded by $K$. In more detail, one conditions on $\F^{(n)}_S$ when bounding the expectation in the foregoing display for a fixed pair $(S,T)$ and then handles an expectation for the voter model started at $\xi_S$ by
(\ref{Aest}) and (\ref{Aest1}), which requires conditions (i), (ii) and (iv-1) or (iv-2) of Theorem~\ref{thmm:1} and the use of (\ref{eq:exptail}). Since $\phi$ is a bounded function, (\ref{eq:Aldous}) is enough to fulfill Aldous's condition on $C$-tightness for the sequence of laws of $\{\phi(A^{(n)})\}$ (cf. \cite[Theorem VI.4.5]{JS}). 

To obtain the $C$-tightness of the sequence of laws of $\{\phi(M^{(n)})\}$, 
we first observe that the predictable quadratic variations of $\phi(M^{(n)})$ is given explicitly by
\begin{linenomath}
\begin{align}
\begin{split}\label{mg-FV-qv}
\big\langle \phi\big(M^{(n)}\big)\big\rangle_t =& \gamma_n\int_0^t\Big(
L^{\mu_n}_{\sf VM}\phi^2\circ\mathbf m(\xi_{\gamma_n r})-2\phi\circ\mathbf m(\xi_{\gamma_n r}) L^{\mu_n}_{\sf VM}\phi\circ\mathbf m (\xi_{ \gamma_n r})\Big) dr.
\end{split}
\end{align}
\end{linenomath}
The foregoing equation follows since $\Phi$ is closed under multiplication and $\phi\big(M^{(n)}\big)$ and $\phi^2\big(M^{(n)}\big)$ are both $(\F_t^{(n)})$-martingales
(cf. \cite[Exercise II.29]{EK:MP}). We apply the boundedness of $\phi$ and (\ref{Aest1}) to (\ref{mg-FV-qv}), and argue as in the case of $\phi(A^{(n)})$
that (\ref{eq:Aldous}) holds with $\phi(A^{(n)}_T)$ and $\phi(A^{(n)}_S)$ replaced by $\langle \phi(M^{(n)})\rangle_T$ and $\langle \phi(M^{(n)})\rangle_S$, respectively. By \cite[Theorem VI.4.5]{JS} again, we have the $C$-tightness of the sequence of laws of $\{\langle \phi(M^{(n)})\rangle \}$. Then, since 
$\phi(M^{(n)})$ has jumps bounded by $C_\phi\pi^{(n)}_{\max}\to 0$ by condition (i) of Theorem~\ref{thmm:1}, \cite[Theorem VI.4.13, Proposition VI.3.26]{JS} apply and we obtain the required $C$-tightness of the sequence of laws of $\{\phi(M^{(n)})\}$. The proof is complete.
\end{proof}

We are now ready to prove the first main result of this paper.

\begin{proof}[\bf Proof of Theorem~\ref{thmm:1}.] By Skorokhod's representation (cf. \cite[Theorem~3.1.8]{EK:MP}), we may assume by Lemma~\ref{lem:CX} and a change of probability spaces that $\{X^{(n)}\}$ converges in distribution to some $X$ taking values in $C\big(\R_+,\ms P(S)\big)$. Then for any $\phi\in \Phi_1$ with $\phi(\lambda)=\langle f,\lambda\rangle$ for $f\in \C(S)$, 
we deduce from Lemma~\ref{lem:qvbnd} and (\ref{mg-FV-qv}) that $\phi(X_t)-\phi(X_0)-\int_0^t L_{\sf FV}\phi(X_r)dr$ is a continuous martingale with quadratic variation
\[
\int_0^t \Big(
L^{\mu}_{\sf FV}\phi^2(X_r)-2\phi(X_r) L^{\mu}_{\sf FV}\phi(X_r)\Big) dr.
\]
More precisely, to justify the above form of quadratic variation for $\phi(X)$, we need the fact that
$x\mapsto \int_0^\cdot  L^\mu_\FV \phi(x_r)dr$ is a bounded continuous function from $D\big(\R_+,\ms P(S)\big)$ into $D(\R_+,\R)$ for any $\phi\in \Phi$,
and the uniqueness of quadratic variations for martingales.
Then by integration by parts for continuous semimartingales
 and the preceding martingale characterization of $\phi(X)$ for $\phi\in \Phi_1$, we deduce that $X$ is a continuous $\ms P(S)$-valued process satisfying the same martingale problem for the $\mu$-Fleming-Viot process over $\Phi$ as recalled in (\ref{def:FV}) (or see \cite[p.2]{P:CDW}). This completes
the proof of Theorem~\ref{thmm:1}.
\end{proof}

\section{Weak atomic convergence of  empirical measures for voter models}\label{sec:atom}
In this section, we proceed to the weak atomic convergence of empirical measures for voter models in the limit of a large state space. Let us briefly recall the setup in \cite[Section~2]{EK:AT} for weak atomic convergence of finite measures and its implications. Let $d$ be a metric on the compact type space $S$. 
Fix a continuous function  $J:\R_+\to [0,1]$ such that $J(0)=1$ and $J\equiv 0$ on $[1,\infty)$. Let $\rho$ be the Prohorov metric on the set $\ms M_f(S)$ of finite measures on $S$, and define a metric $\rho_a$ on $\ms M_f(S)$ by
\begin{linenomath}
\begin{align}
\begin{split}
\rho_a(\lambda,\nu)=&\rho(\lambda,\nu)\\
+\sup_{0<\vep\leq 1}&\left|\int_{S^2} J\left(\frac{d(\sigma,\tau)}{\vep}\right)d\lambda^{\otimes 2}(\sigma,\tau)-\int_{S^2} J\left(\frac{d(\sigma,\tau)}{\vep}\right)d\nu^{\otimes 2}(\sigma,\tau)\right|.\label{def:rhoa}
\end{split}
\end{align}
\end{linenomath}
This metric $\rho_a$, finer than $\rho$, still keeps $\ms M_f(S)$ a complete separable space and generates the same Borel sigma-field as $\rho$ does (see \cite[Lemma~2.3 and p.5]{EK:AT}). 

To see the role of the second term on the right-hand side of (\ref{def:rhoa}), notice that
corresponding to every finite measure $\lambda$ on $S$ is an atomic measure for the distributions of the atoms of $\lambda$ defined by
\[
\lambda^*\doteq \sum_{\sigma\in S}\lambda(\{\sigma\})^2\delta_{\sigma}.
\]
With the foregoing definition, the readers may note that the difference between the double-integral terms in (\ref{def:rhoa}) for small enough $\vep>0$ means the approximate difference between $\lambda^*(S)$ and $\nu^*(S)$ through the mollifier $J(\cdot/\vep)$. In fact, it can be shown that $\rho_a(\lambda_n,\lambda)\to 0$ if and only if $\rho(\lambda_n,\lambda)\to 0$ and $\lambda_n^*(S)\to \lambda^*(S)$ (see \cite[Lemma~2.2]{EK:AT}), where the last convergence (for the ``starred-measures'') has the obvious interpretation that there is no loss of atoms in the limit.

Below we study limit theorems of the empirical measures for finite voter models (they are probability measures on $S$), and now the space $\ms P(S)$ is equipped with the metric $\rho_a$ finer than the Prohorov metric $\rho$ underlying Theorem~\ref{thmm:1}. For this purpose and since $(\ms P(S),\rho_a)$ is a closed subspace of $(\ms M_f(S),\rho_a)$, we may and will restrict our application of $\rho_a$ to $\ms P(S)$ from now on.

Our second main result in this paper proves the weak atomic convergence of empirical measures for finite voter models.

\begin{thm}\label{thmm:2}
Let $\{X^{(n)}\}$ be a sequence of empirical measures for $(E_n,q^{(n)},\mu_n)$-voter models time-changed by $\gamma_n$ as in (\ref{timechange}). Assume that the sequence $\{X^{(n)}\}$ converges weakly to a $\mu$-Fleming-Viot process $X$ on $D\big(\R_+,\ms P(S)\big)$ for $\ms P(S)$ equipped with the Prohorov metric.  

Now equip the space $\ms P(S)$ with the metric $\rho_a$ instead. Then $X$ has almost all sample paths in $C\big(\R_+,\ms P(S)\big)$. In addition, if the sequence of laws of $\{X_0^{(n)}\}$ converges to the law of $X_0$ as probability measures in $\ms P(S)$ and we have 
\begin{linenomath}
\begin{align}\label{assump:mu}
\lim_{\vep \searrow 0+}\sup_{\tau\in S}\limsup_{n\to\infty}\;\gamma_n\mu_n\{\sigma\in S;0<d(\sigma,\tau)\leq \vep\}=0,
\end{align}
\end{linenomath}
then the sequence of laws of $\{X^{(n)}\}$ converges as probability measures on $D\big(\R_+,\ms P(S)\big)$ to the law of the $\mu$-Fleming-Viot process $X$.
\end{thm}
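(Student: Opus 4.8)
The strategy is to combine the weak convergence already established in the Prohorov topology (the hypothesis, supplied by Theorem~\ref{thmm:1}) with the criterion for weak atomic convergence of measure-valued processes from \cite{EK:AT}. By that criterion, if a sequence $\{X^{(n)}\}$ converges weakly in $D\big(\R_+,(\ms P(S),\rho)\big)$ to a process $X$ whose limiting measures $X_t$ are $\P$-a.s.\ \emph{purely atomic} for each fixed $t$, and if one has the tightness-type control
\[
\lim_{\vep\searrow 0}\limsup_{n\to\infty}\,\E^{(n)}\!\left[\int_0^T\!\int_{\{0<d(\sigma,\tau)\le\vep\}} dX^{(n)}_r(\sigma)\,dX^{(n)}_r(\tau)\,dr\right]=0
\]
(the ``no mass near the diagonal off the diagonal'' condition, uniformly in time on compacts), then in fact $X^{(n)}\To X$ in $D\big(\R_+,(\ms P(S),\rho_a)\big)$, and the limit has continuous paths in the finer topology. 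So there are really two things to verify: (a) pure atomicity of the Fleming-Viot marginals, and (b) the diagonal-tail estimate, with the mutation hypothesis \eqref{assump:mu} feeding into (b) through the $L_\mu$ contribution.

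First I would record that a $\mu$-Fleming-Viot process with $\mu$ a finite measure (so bounded mutation) has $X_t$ purely atomic for every fixed $t>0$; this is classical (e.g.\ \cite[Theorem~4.5]{EK:MP} or \cite[Theorem~4.1]{Shiga_90}) — finite mutation rate does not destroy the atomic structure coming from the underlying resampling/coalescent. This gives pure atomicity for $t>0$; at $t=0$ one invokes the hypothesis that the law of $X_0$ is the limit of the laws of $X_0^{(n)}$, each of which is a finite atomic measure (a finite sum $\sum_x \pi^{(n)}(x)\delta_{\xi(x)}$), so $X_0$ is $\P$-a.s.\ purely atomic as well; this handles the endpoint of the time interval. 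With pure atomicity in hand, continuity of paths in $\rho_a$ is automatic from the \cite{EK:AT} machinery once the convergence is upgraded, so the crux is (b).

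For (b), the plan is to express the relevant functional in terms of the test functions $F_f$ of \eqref{def:Ff} and use the generator computation in Proposition~\ref{prop:genF} together with the duality estimate of Proposition~\ref{prop:dual}. Concretely, for fixed small $\vep$ take $f=f_\vep$ to be (a smoothed version of) $\1_{\{0<d(\sigma,\tau)\le\vep\}}$, so that $F_{f_\vep}(\xi)=\langle f_\vep,\mathbf m(\xi)^{\otimes 2}\rangle=\int_{\{0<d\le\vep\}}dX^{(n)}\,dX^{(n)}$ after the time change. Writing the Dynkin formula for $\gamma_n^{-1}$-rescaled time, $\E^{(n)}_\xi[F_{f_\vep}(\xi_{\gamma_n r})]$ evolves under $\gamma_n L^{\mu_n}_{\sf VM}F_{f_\vep}$, whose voting part $\gamma_n L_{\sf VM}F_{f_\vep}$ is controlled by Proposition~\ref{prop:genF}: every term there is a sum over $x,y$ of $\pi^{(n)}(x)^2 q^{(n)}(x,y)$ times bounded quantities, so (after multiplying by $\gamma_n$) it is $O_\vep(\gamma_n\pid^{(n)})$ in the sup norm by the same estimates as in Proposition~\ref{prop:genbdd} — and crucially, because $f_\vep$ vanishes on the diagonal, one can use duality (Proposition~\ref{prop:dual}) to bound the \emph{time-integrated} contribution by $\gamma_n\pid^{(n)}\int_0^{\gamma_n T}\P(M_{V,V'}>s)\,ds$, which stays bounded as $n\to\infty$ by \eqref{eq:exptail} but carries a prefactor that shrinks with $\vep$; indeed, the dual representation expresses $F_{f_\vep}(\xi_t)$ in terms of two coalescing chains carrying independent mutations, and $f_\vep$ can be nonzero only if the two resulting types are distinct and at distance $\le\vep$, which forces either a mutation event (so the type is freshly drawn from $\overline{\mu_n}$, landing in the $\vep$-ball around a fixed point only with the probability controlled by \eqref{assump:mu}) or the two chains not yet having met with their initial types at distance $\le\vep$ (a static, $n$-uniform input governed by $X_0^{(n)}\to X_0$ and the atomicity just discussed). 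The mutation part $\gamma_n L_{\mu_n}F_{f_\vep}$ is handled directly from \eqref{eq:Ff2}: each term involves either $\langle f_\vep(\,\cdot\,,\sigma),\mu_n\rangle$-type integrals, bounded by $\sup_\tau \mu_n\{0<d(\sigma,\tau)\le\vep\}$, times $\gamma_n$, which is exactly the quantity sent to $0$ by \eqref{assump:mu} after taking $\limsup_n$ then $\vep\searrow 0$. Assembling these: $\limsup_n \E^{(n)}[\int_0^T F_{f_\vep}(X^{(n)}_r)\,dr]$ is bounded by a constant times $\big(\sup_\tau\limsup_n\gamma_n\mu_n\{0<d(\sigma,\tau)\le\vep\} + \sup_n \P(\text{chains unmet, initial types within }\vep)\big)$, and both pieces $\to 0$ as $\vep\searrow 0$ — the first by \eqref{assump:mu}, the second because the initial empirical measures' ``diagonal starred mass'' converges (from $X_0^{(n)}\to X_0$ in the Prohorov sense together with the fact that we may also assume convergence of $\int_{\{0<d\le\vep\}}dX_0^{(n)}\,dX_0^{(n)}$ uniformly — this is where one needs that the hypothesis is stated as convergence of laws, allowing a Skorokhod coupling).

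\textbf{Main obstacle.} The hardest part is the diagonal-tail estimate (b), specifically isolating the contribution of \emph{off-diagonal near-diagonal} mass and showing it is small uniformly in $n$ \emph{and} uniformly in $r\in[0,T]$. The subtlety is that two types in $\xi_{\gamma_n r}$ can be distinct yet very close only if they were ``recently created'' by mutation (rate $\gamma_n\mu_n(\1)\to\mu(\1)$, hence $O(1)$ per unit rescaled time) landing near an existing type, or if they descend without coalescing from two initially-distinct-but-close types; the first is precisely what \eqref{assump:mu} kills, but making the duality bookkeeping rigorous — tracking which of the two dual chains has had a mutation, and integrating the meeting-time tails against \eqref{eq:exptail} with the correct $\vep$-dependent prefactor — requires care, and is the analogue (in the presence of mutation) of the corresponding step in \cite[Section~4]{EK:AT} for Moran processes. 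Everything else (pure atomicity, path continuity, the final invocation of the \cite{EK:AT} upgrade criterion, and the $t=0$ endpoint) is relatively routine given the results already proved in the paper.
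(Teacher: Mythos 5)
There is a genuine gap. The decisive step — your item (b) — is exactly the part you leave unproved, and the duality/meeting-time bookkeeping you sketch is not how it can be closed cheaply; moreover you have weakened the criterion you need to verify. The condition \cite[(2.21)]{EK:AT} that licenses the upgrade from Prohorov to weak atomic convergence is a \emph{uniform-in-time} control: one must bound $\P^{(n)}\bigl(\sup_{0\le t\le T}F_{f_\vep}(\xi_{\gamma_n t})\ge\delta\bigr)$ as in \eqref{prob-bdd}, not the time-integrated expectation displayed in your plan. A bound on $\int_0^T\E^{(n)}[\,\cdot\,]\,dr$ does not exclude short excursions of near-diagonal off-diagonal mass and hence does not give relative compactness in $D\big(\R_+,(\ms P(S),\rho_a)\big)$; to pass from fixed-time or integrated first-moment estimates to the supremum you need a maximal inequality, and your outline provides none. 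The paper obtains it structurally: by Proposition~\ref{prop:genF}, since $f_\vep\ge 0$ is symmetric and vanishes on the diagonal, \emph{every} voting term in \eqref{eq:Ff1} and the first two groups of mutation terms in \eqref{eq:Ff2} are $\le 0$, so $L^{\mu_n}_{\VM}F_{f_\vep}(\xi)\le 2\sum_x\pi^{(n)}(x)\int_S f_\vep(\sigma,\xi(x))\,d\mu_n(\sigma)$; consequently $F_{f_\vep}(\xi_{\gamma_n\cdot})$ is dominated as in \eqref{dom:mg} by a nonnegative submartingale whose drift involves only the mutation measure, the initial term is small by the assumed weak atomic convergence of $X_0^{(n)}$, the drift is small by \eqref{assump:mu}, and Doob's weak $L^1$ inequality yields \eqref{prob-bdd}. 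No duality, no meeting-time asymptotics such as \eqref{eq:exptail}, and no pure atomicity of the limit are needed; your plan misses this sign observation, which is the whole point, and instead defers the hard estimate to an uncarried-out genealogical argument that you yourself flag as the obstacle.

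Two smaller inaccuracies in your framing: pure atomicity of the Fleming-Viot marginals is not an ingredient of the Ethier--Kurtz criterion and plays no role in the paper's argument; and your justification that $X_0$ is purely atomic because it is a limit of atomic measures is false in general — even convergence in $\rho_a$ only transports the total atomic mass $\lambda_n^*(S)\to\lambda^*(S)$ and does not force the limit to be purely atomic (uniform measures on $n$ points converge weakly atomically to a diffuse limit). Neither claim is needed, but as stated they would not survive scrutiny.
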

\begin{proof}
That $X$ has almost all sample paths in $C\big(\R_+,\ms P(S)\big)$ follows from \cite[Theorem~3.1]{EK:MP}. For the second assertion, we will show that for every $T,\delta>0$, there exists $\vep>0$ such that
\begin{linenomath}
\begin{align}\label{prob-bdd}
\P^{(n)}\left(\sup_{0\leq t\leq T}F_{f_\vep}(\xi_{\gamma_n t})\geq \delta\right)\leq \delta
\end{align}
\end{linenomath}
for all large enough $n$, where $F_{f_\vep}$ is as defined in (\ref{def:Ff}) for
\[
f_\vep(\sigma,\tau)=J\left(\frac{d(\sigma,\tau)}{\vep}\right)-\1_{\{d(\sigma,\tau)=0\}}.
\]
Since 
\[
F_{f_\vep}(\xi_{\gamma_nt})=\int_{S^2}J\left(\frac{d(\sigma,\tau)}{\vep}\right)dX^{(n)\otimes 2}_t(\sigma,\tau)-(X^{(n)}_t)^*(S)
\]
by definition,
the bound (\ref{prob-bdd}) fulfills the condition of \cite[(2.21)]{EK:AT}, and so implies the required weak atomic convergence of the sequence of laws of $\{X^{(n)}\}$ toward $X$. 

The proof of (\ref{prob-bdd}) relies on some facts which we now state. First,
since $f_\vep$ is symmetric and nonnegative, and vanishes on the diagonal, it follows from Proposition~\ref{prop:genF} that
\begin{linenomath}
\begin{align*}
\begin{split}
L^{\mu_n}_\VM F_{f_\vep}(\xi)\leq &2\sum_{x\in E_n}\pi^{(n)}(x)\int_S f_\vep\big(\sigma,\xi(x)\big)d\mu_n(\sigma).
\end{split}
\end{align*}
\end{linenomath}
Hence, we have
\begin{linenomath}
\begin{align}\label{dom:mg}
F_{f_\vep}(\xi_{\gamma_nt})=&F_{f_\vep}(\xi_0)+\int_0^t \gamma_n L^{\mu_n}_{\sf VM}F_{f_\vep}(\xi_{\gamma_n r})dr+M_t\notag\\
\leq &F_{f_\vep}(\xi_0)+\int_0^t2\gamma_n\sum_{x\in E_n}\pi^{(n)}(x)\int_S f_\vep\big(\sigma,\xi_{\gamma_nr}(x)\big)d\mu_n(\sigma)dr+M_t,
\end{align}
\end{linenomath}
where $M$ is a martingale with $M_0=0$, and so the right-hand side of (\ref{dom:mg}) defines a nonnegative submartingale. Second, the map
\begin{linenomath}
\begin{align}\label{map}
\lambda\lmt\left( \int_{S^2}J\left(\frac{d(\sigma,\tau)}{\vep}\right)d\lambda^{\otimes 2}(\sigma,\tau)-\lambda^*(S)\right)\wedge 1
\end{align}
\end{linenomath}
is a bounded continuous function on $\ms P(S)$ by \cite[Theorem 1.2.8]{B:CPM} and \cite[Lemma~2.2]{EK:AT}, and we have
\begin{linenomath}
\begin{align}\label{*conv}
\lim_{\vep\searrow  0+}\left(\int_{S^2}J\left(\frac{d(\sigma,\tau)}{\vep}\right)d\lambda^{\otimes 2}(\sigma,\tau)-\lambda^*(S)\right)=0,\quad \forall\;\lambda\in \ms P(S),
\end{align}
\end{linenomath}
as already discussed before the present theorem in an informal manner.

We are ready to prove (\ref{prob-bdd}). Fix $T,\delta>0$. By the assumed weak atomic convergence of $X^{(n)}_0$ to $X_0$ in distribution, the bounded continuity of the map (\ref{map}) and (\ref{*conv}), 
it holds that for any given $\vep>0$,
\begin{linenomath}
\begin{align}\label{atom:bdd1}
\E^{(n)}[F_{f_{\vep}}(\xi_0)]\leq \frac{\delta^2}{2}
\end{align}
\end{linenomath}
for all large enough $n$. 
In addition,
thanks to (\ref{assump:mu}) and the fact that $f_\vep(\sigma,\tau)$ is supported on $\{(\sigma,\tau)\in S\times S;0<d(\sigma,\tau)\leq \vep\}$ with $|f_\vep|\leq 1$, we can choose $\vep>0$ such that
\begin{linenomath}
\begin{align}
\sup_{\xi\in S^{E_n}}\gamma_n\sum_{x\in E_n}\pi^{(n)}(x)\int_S f_\vep\big(\sigma,\xi(x)\big)d\mu_n(\sigma)\leq \frac{\delta^2}{4T}\label{atom:bdd2}
\end{align}
\end{linenomath}
for all large $n$. Now by (\ref{dom:mg}) and an application of Doob's weak $L^1$-inequality to the nonnegative submartingale on the right-hand side of (\ref{dom:mg}), we deduce from (\ref{atom:bdd1}) and (\ref{atom:bdd2}) that with respect to the $\vep>0$ chosen for (\ref{atom:bdd2}),
\begin{linenomath}
\begin{align*}
& \P^{(n)}\left(\sup_{0\leq t\leq T}F_{f_\vep}(\xi_{\gamma_n t})\geq \delta\right)\\
\leq &\frac{1}{\delta}\left(\E^{(n)}[F_{f_\vep}(\xi_0)]+\E^{(n)}\left[\int_0^T2\gamma_n\sum_{x\in E_n}\pi^{(n)}(x)\int_S f_\vep\big(\sigma,\xi_{\gamma_ns}(x)\big)d\mu_n(\sigma)ds\right]\right)\leq \delta,
\end{align*}
\end{linenomath}
for all large enough $n$. 
The last inequality proves (\ref{prob-bdd}), and the proof is complete.
\end{proof}

As an application of Theorem~\ref{thmm:2}, we consider the convergence of atom-size point processes for voter models when mutation is absent.
We recall the coalescing Markov chains $(X^x)$,
  and in the following write $N_n=\#E_n$, 
\[
\widehat{\mathsf C}^{(n)}_j=\inf\{t\geq 0;|\{X^x_t;x\in E_n\}|=j\},\quad 1\leq j\leq N_n,
\]
and  $Z_1,Z_2,\cdots$ for a sequence of independent exponential variables with $\E[Z_j]=1/{j\choose 2}$.

\begin{thm}\label{thmm:3}
Equip $\ms P(S)$ with the metric $\rho_a$. 
Let $\{X^{(n)}\}$ be a sequence of empirical measures for $(E_n,q^{(n)},0)$-voter models time-changed by $\gamma_n$ as in (\ref{timechange}), which converges weakly to a Fleming-Viot process $X$ without mutation.

Suppose also that for each $n\in \Bbb N$, the types under the initial condition of the $(E_n,q^{(n)},0)$-voter model are all distinct almost surely and it holds that
\begin{linenomath}
\begin{align}\label{conv:time}
\frac{\widehat{\mathsf C}^{(n)}_j}{\gamma_n}\convdn \sum_{i=j+1}^\infty Z_i,\quad \forall\;j\in \Bbb N.
\end{align}
\end{linenomath}
Then for every continuous function $f:[0,1]\to  \R$, the process
\begin{linenomath}
\begin{align}\label{fseries}
\sum_{i=1}^\infty f\big(\mathfrak a_i(X_s)\big),\quad s\in (0,\infty),
\end{align}
\end{linenomath}
is continuous, and for every fixed $t\in (0,\infty)$,
\[
\left(\sum_{i=1}^\infty f\big(\mathfrak a_i(X_s^{(n)})\big);\; s\geq t\right)\convdn\left(\sum_{i=1}^\infty f\big(\mathfrak a_i(X_s)\big);\; s\geq t\right)
\]
as probability measures on $D\big([t,\infty),\R\big)$.     
\end{thm}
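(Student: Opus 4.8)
\medskip
\noindent\textbf{Plan of proof of Theorem~\ref{thmm:3}.}
Since the voter models carry no mutation, condition \eqref{assump:mu} is vacuous, and since the initial types are pairwise distinct we have $(X^{(n)}_0)^\ast(S)=\pid^{(n)}\to 0=X_0^\ast(S)$; hence the hypothesized weak convergence together with Theorem~\ref{thmm:2} already yields the \emph{weak atomic} convergence $X^{(n)}\to X$ in $D(\R_+,(\ms P(S),\rho_a))$, with $X$ continuous. Write $\Theta_f(\lambda)=\sum_{i\ge 1}f(\mathfrak a_i(\lambda))$; throughout we take $f(0)=0$ (as for $\mathsf{Ent}$ and $\mathsf{Div}$; otherwise the series diverges against the finitely many atoms that appear below), so $\Theta_f(\lambda)$ is a finite sum. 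Let $\mathfrak N(\lambda)$ be the number of atoms of $\lambda$ and $\ms P_{\le K}(S)=\{\lambda\in\ms P(S):\mathfrak N(\lambda)\le K\}$; this set is closed in $(\ms P(S),\rho_a)$, since a weak limit of probability measures with at most $K$ atoms has at most $K$ atoms, by compactness of $S$. The difficulty is that $\Theta_f$ is \emph{not} $\rho_a$-continuous on $\ms P(S)$ -- mass can be carried by a growing number of vanishing atoms (take $\lambda_n=\tfrac1n\sum_{j\le n}\delta_{x_j}$ with $f=\mathsf{Ent}$) -- so the plan is to confine both $X^{(n)}$ and $X$, uniformly on $[t,\infty)$, to one fixed $\ms P_{\le K}(S)$, using the ``coming down from infinity'' encoded in \eqref{conv:time} to choose $K$ large.

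\emph{Step 1 (uniform atom bounds).} Because mutation is absent, the set of types present in $\xi^{(n)}_u$ is nonincreasing in $u$, hence $\mathfrak N(X^{(n)}_s)$ is nonincreasing in $s$; and by the duality of Section~\ref{sec:dual}, starting from pairwise-distinct types, the number of types present in $\xi^{(n)}_u$ has the law of $|\{X^x_u;x\in E_n\}|$. Therefore, for $K\in\bN$, the event $\{X^{(n)}_s\notin\ms P_{\le K}(S)\text{ for some }s\ge t\}$ coincides with $\{\mathfrak N(X^{(n)}_t)>K\}$, and $\P^{(n)}(\mathfrak N(X^{(n)}_t)>K)=\P^{(n)}(\widehat{\mathsf C}^{(n)}_K>\gamma_n t)$, which by \eqref{conv:time} tends to $q_K:=\P(\sum_{i>K}Z_i>t)$ (the limit law being atomless). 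Since $\sum_{i>K}Z_i$ is decreasing in $K$ with mean $\sum_{i>K}1/\binom{i}{2}=2/K\to 0$, it tends to $0$ a.s., so $q_K\downarrow 0$. For the limit process, realize (Skorokhod representation) $X^{(n)}\to X$ almost surely in $D(\R_+,(\ms P(S),\rho_a))$; the upper semicontinuity of $\mathfrak N$ under $\rho_a$-convergence gives $\mathfrak N(X_s)\le\liminf_n\mathfrak N(X^{(n)}_s)$ for every $s$, and combining this with the monotone-in-$s$ bound above and Fatou's lemma yields $\P(X_s\notin\ms P_{\le K}(S)\text{ for some }s\ge t)\le q_K$. (Alternatively one may quote that the mutationless Fleming--Viot process has an a.s.\ nonincreasing, a.s.\ finite number of atoms at positive times; cf.\ \cite[Ch.~10]{EK:MP}.) In particular $X_s$ is purely atomic with finitely many atoms for every $s>0$.

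\emph{Step 2 (continuity on $\ms P_{\le K}(S)$, first assertion).} On $\ms P_{\le K}(S)$ the sum defining $\Theta_f$ has at most $K$ terms, and by \cite[Lemma~2.2]{EK:AT} (which identifies $\rho_a$-convergence with weak convergence together with convergence of the starred total masses) plus a compactness argument, a $\rho_a$-convergent sequence in $\ms P_{\le K}(S)$ has convergent ranked atom sizes $\mathfrak a_i$; continuity of $f$ on $[0,1]$ then makes $\Theta_f$ $\rho_a$-continuous on $\ms P_{\le K}(S)$. Consequently $\Psi_K\colon\omega\mapsto(\Theta_f(\omega_s))_{s\ge t}$ is continuous from $D([t,\infty),\ms P_{\le K}(S))$ into $D([t,\infty),\R)$. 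The first assertion follows: on the a.s.\ event where $\mathfrak N(X_\cdot)$ is bounded on $[t',\infty)$ for every rational $t'>0$ (Step~1) and $s\mapsto X_s$ is $\rho_a$-continuous, each restriction $(\Theta_f(X_s))_{s\ge t}$ is $\Psi_K$ applied to a continuous $\ms P_{\le K}(S)$-valued path, hence continuous; since $t>0$ is arbitrary, $s\mapsto\Theta_f(X_s)$ is continuous on $(0,\infty)$.

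\emph{Step 3 (the convergence).} Fix $t>0$. It suffices to show $\E^{(n)}[h(\Psi(X^{(n)}))]\to\E[h(\Psi(X))]$ for every bounded Lipschitz $h$ on $D([t,\infty),\R)$. Given $\eta>0$ choose $K$ with $q_K\le\eta$, and view $\mathcal K_K:=D([t,\infty),\ms P_{\le K}(S))$ as a closed subset of $D([t,\infty),\ms P(S))$; on $\mathcal K_K$ the map $h\circ\Psi_K$ is bounded and continuous, so by Tietze's theorem it extends to a bounded continuous $\widetilde h$ on $D([t,\infty),\ms P(S))$ with $\|\widetilde h\|_\infty\le\|h\|_\infty$. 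Splitting according to whether the restricted path lies in $\mathcal K_K$, $\E^{(n)}[h(\Psi(X^{(n)}))]=\E^{(n)}[\widetilde h(X^{(n)})]+O(\|h\|_\infty)\,\P^{(n)}(X^{(n)}_s\notin\ms P_{\le K}(S)\text{ for some }s\ge t)$; by the continuous mapping theorem $\E^{(n)}[\widetilde h(X^{(n)})]\to\E[\widetilde h(X)]$, and the same splitting for $X$ gives $\E[\widetilde h(X)]=\E[h(\Psi(X))]+O(\|h\|_\infty)\,\P(X_s\notin\ms P_{\le K}(S)\text{ for some }s\ge t)$. Letting $n\to\infty$ and using the two bounds $\le q_K\le\eta$ from Step~1, then $\eta\downarrow0$, gives the claim; hence $(\Theta_f(X^{(n)}_s))_{s\ge t}\To(\Theta_f(X_s))_{s\ge t}$ in $D([t,\infty),\R)$. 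The crux is Step~1 for the \emph{limit} $X$: one must dominate $\mathfrak N(X_s)$ uniformly over $s\ge t$ by a single deterministic $K$ on a high-probability event, transferring the coalescent input \eqref{conv:time} across the limit; this is precisely where weak \emph{atomic} convergence -- through the upper semicontinuity of the atom count under $\rho_a$ and the conservation of $\lambda^\ast(S)$ built into $\rho_a$ -- is indispensable, whereas the continuity of $\Theta_f$ on each $\ms P_{\le K}(S)$ and the extension step are routine.
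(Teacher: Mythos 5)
Your proposal is correct and follows essentially the same route as the paper: bound the number of atoms of $X^{(n)}_t$ via duality, the distinct initial types and the coming-down-from-infinity hypothesis (\ref{conv:time}), transfer this bound to the limit $X$ through the $\rho_a$-continuity of the ranked atom sizes \cite[Lemma~2.5]{EK:AT} under a Skorokhod coupling, use monotonicity of the type count (absence of mutation) to make the bound uniform on $[t,\infty)$, and then pass to the limit on the truncated sums. The only differences are cosmetic — you finish with a Tietze extension and the continuous mapping theorem where the paper uses a direct sup-estimate of $|\mathcal S^f_\infty(X^{(n)}_s)-\mathcal S^f_\infty(X_s)|$ under the coupling — and one small repair: $\{\lambda:\mathfrak N(\lambda)\le K\}$ is closed in $(\ms P(S),\rho_a)$ by the same Lemma~2.5 (atoms cannot appear under $\rho_a$-limits), not because weak limits of measures with at most $K$ atoms have at most $K$ atoms, which is false for the Prohorov topology.
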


The convergence in (\ref{conv:time}) under appropriate
conditions is proven in\cite[Theorem~1.1, Theorem~
1.2]{Oliveira_2013}, resolving an open problem in
\cite{aldous-fill-2014}.  
These conditions are enough (applying
Theorems~\ref{thmm:1} and \ref{thmm:2}) for the weak convergence of $\{X^{(n)}\}$ to the Fleming-Viot process without mutation as required in Theorem~\ref{thmm:3}.
See also \cite[Section~2]{CCC} for related results in terms of the Wright-Fisher diffusion.

\begin{rmk}
(1) Implicit in the conclusion of Theorem~\ref{thmm:3} is the fact that (\ref{fseries}) is a  finite sum for all $s>0$ almost surely.\\

\noindent (2) As particular applications of Theorem~\ref{thmm:3}, we obtain the weak convergence of the  entropy processes and diversity processes associated with the voter models under consideration (cf. the definitions in (\ref{def:ent}) and (\ref{def:div})). \qed 
\end{rmk}

\begin{proof}[Proof of Theorem~\ref{thmm:3}]
For $\lambda\in \ms P(S)$ with masses of its atoms arranged in the decreasing order $\mathfrak a_1(\lambda)\geq \mathfrak a_2(\lambda)\geq \cdots 
$, we define
\[
\mathcal  S^f_L(\lambda)\doteq\sum_{i=1}^{L\wedge \mathfrak N(\lambda)}f\big(\mathfrak a_i(\lambda)\big),\quad L\in \Bbb N\cup \{\infty\},
\]
where $\mathfrak N(\lambda)\in \Bbb Z_+\cup \{\infty\}$ is the number of atoms of $\lambda$. In the following, we may assume that $X^{(n)}\to X$ a.s. in $D\big(\R_+,\ms P(S)\big)$ by a change of probability spaces and Skorokhod's representation (cf. \cite[Theorem~3.1.8]{EK:MP}). Since $X$ takes values in $C\big(\R_+,\mathscr M_f(S)\big)$ by Theorem~\ref{thmm:2} where $\ms P(S)$ is equipped with $\rho_a$, it follows from \cite[Theorem III.10.1]{EK:MP} that almost surely, 
\begin{linenomath}
\begin{align}\label{rhoa-conv}
\rho_a(X^{(n)}_t,X_t)\xrightarrow[n\to\infty]{}0\quad \forall \;t\in \R_+.
\end{align}
\end{linenomath}

\begin{claim}\label{c:1}
We have 
\begin{linenomath}
\begin{align}\label{ineq:at1}
\lim_{L\to\infty}\Big(\P(\mathfrak N(X_t)>L)+\limsup_{n\to\infty}\P(\mathfrak N(X^{(n)}_t)>L)\Big)=0.
\end{align}
\end{linenomath}

To see (\ref{ineq:at1}), we note that for all $L\in \Bbb N$,
\begin{linenomath}
\begin{align*}
\P(\mathfrak N(X_t)>L)\leq \P(\mathfrak a_{L+1}(X_t)>0)
\leq \liminf_{n\to\infty}\P^{(n)}\left(\mathfrak a_{L+1}(X^{(n)}_t)>0\right),
\end{align*}
\end{linenomath}
where the last inequality follows from (\ref{rhoa-conv}) and \cite[Lemma 2.5]{EK:AT}. We 
observe that $\mathfrak a_{L+1}(X^{(n)}_t)>0$ implies that the number of distinct types under $\xi_{\gamma_n t}$, that is $\#\{\xi_{\gamma_n t}(x);x\in E_n\}$ (with $\{\xi_{\gamma_nt}(x);x\in E_n\}$ regarded as a set), is at least $L+1$ under $\P^{(n)}$. 
Also, we observe from duality (Section~\ref{sec:dual}) that $\big(\xi_{\gamma_nt}(x);x\in E_n\big)$ and $\big(\xi_0(X_{\gamma_nt}^x);x\in E_n\big)$ have the same distribution (see (\ref{prob:dual})). Since all of the coordinates of $\xi_0$ under $\P^{(n)}$ are distinct by assumption, applying these two observations to the last inequality gives
\begin{linenomath}
\begin{align*}
\lim_{L\to\infty}\P(\mathfrak N(X_t)>L)\leq &\lim_{L\to\infty}\lim_{n\to\infty}\P\left(\widehat{\mathsf C}^{(n)}_{L+1}\geq \gamma_nt\right)
=\lim_{L\to\infty}\P\left(\sum_{j=L+2}^\infty Z_j\geq t\right)=0,
\end{align*}
\end{linenomath}
where the first equality follows from (\ref{conv:time}). 
Similarly, 
\begin{linenomath}
\begin{align*}
\lim_{L\to\infty}\limsup_{n\to\infty}\P\big(\mathfrak N(X^{(n)}_t)>L\big)=0.
\end{align*}
\end{linenomath}
The last two displays prove our claim (\ref{ineq:at1}). 
\end{claim}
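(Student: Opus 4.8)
The plan is to bound both summands in (\ref{ineq:at1}) by a single quantity governed by the dual coalescing system, and then let $L\to\infty$ using the ``coming down from infinity'' property encoded in (\ref{conv:time}). First I would observe that $\mathfrak N(\lambda)>L$ is exactly the event $\mathfrak a_{L+1}(\lambda)>0$, so the two terms become $\P(\mathfrak a_{L+1}(X_t)>0)$ and $\P^{(n)}(\mathfrak a_{L+1}(X^{(n)}_t)>0)$. The strategy is to dominate the Fleming--Viot term by the voter-model term, reduce the latter to a coalescent block-count probability, and then exploit that this probability vanishes as $L\to\infty$.

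For the Fleming--Viot term I would pass it onto the voter-model term. Using the almost sure convergence $\rho_a(X^{(n)}_t,X_t)\to 0$ from (\ref{rhoa-conv}) together with the stability of ordered atom sizes under weak atomic convergence (Lemma~2.5 of \cite{EK:AT}), whenever $\mathfrak a_{L+1}(X_t)>0$ one has $\mathfrak a_{L+1}(X^{(n)}_t)>0$ for all large $n$; hence $\1_{\{\mathfrak a_{L+1}(X_t)>0\}}\le \liminf_n \1_{\{\mathfrak a_{L+1}(X^{(n)}_t)>0\}}$ almost surely, and Fatou's lemma yields $\P(\mathfrak a_{L+1}(X_t)>0)\le \liminf_n \P^{(n)}(\mathfrak a_{L+1}(X^{(n)}_t)>0)$. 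It therefore suffices to control the voter-model probabilities, since the Fleming--Viot contribution is dominated by their lower limit.

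Next I would translate the atom condition into the coalescent. The event $\mathfrak a_{L+1}(X^{(n)}_t)>0$ forces $\xi_{\gamma_n t}$ to carry at least $L+1$ distinct types. By the duality of Section~\ref{sec:dual}, $(\xi_{\gamma_n t}(x))_{x\in E_n}$ has the same law as $(\xi_0(X^x_{\gamma_n t}))_{x\in E_n}$, and since the initial types are almost surely all distinct, the number of distinct types in the latter equals the block count of the coalescing system at time $\gamma_n t$. Consequently $\{\mathfrak a_{L+1}(X^{(n)}_t)>0\}\subseteq\{\widehat{\mathsf C}^{(n)}_{L+1}\ge \gamma_n t\}$, and passing to the limit through (\ref{conv:time}) gives $\limsup_n \P^{(n)}(\mathfrak a_{L+1}(X^{(n)}_t)>0)\le \P\big(\sum_{j=L+2}^\infty Z_j\ge t\big)$.

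Finally I would send $L\to\infty$. Since $\E\big[\sum_{j=L+2}^\infty Z_j\big]=\sum_{j=L+2}^\infty \binom{j}{2}^{-1}=2/(L+1)\to 0$, the tail sum tends to $0$ in probability, so $\P\big(\sum_{j=L+2}^\infty Z_j\ge t\big)\to 0$ for every fixed $t>0$; this is exactly the statement that the limiting coalescent comes down from infinity. Combining this with the two reductions above establishes (\ref{ineq:at1}). The main obstacle I expect is the duality bookkeeping in the third step: correctly identifying ``number of distinct types'' with the coalescent block count and checking that the containment of events runs in the right (non-strict) direction, together with invoking Lemma~2.5 of \cite{EK:AT} in its correct lower-semicontinuous form for the Fleming--Viot term.
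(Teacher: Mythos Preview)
Your proposal is correct and follows essentially the same route as the paper: reduce the Fleming--Viot term to the voter-model term via (\ref{rhoa-conv}) and \cite[Lemma~2.5]{EK:AT}, translate the atom-count event into the coalescent block count through duality and the assumption of distinct initial types, and conclude via (\ref{conv:time}) and the summability of $\binom{j}{2}^{-1}$. Your explicit use of Fatou's lemma and the expectation computation $\sum_{j\ge L+2}\binom{j}{2}^{-1}=2/(L+1)$ merely make explicit what the paper leaves implicit.
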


\begin{claim}\label{c:2}
We have (1) 
\begin{linenomath}
\begin{align}\label{atom:conv}
\big(\mathfrak a_1(X^{(n)}),\mathfrak a_2(X^{(n)}),\cdots\big)\xrightarrow[n\to\infty]{\mbox{a.s.}}\big(\mathfrak a_1(X),\mathfrak a_2(X^{(n)}),\cdots\big).
\end{align}
\end{linenomath}
in $D(\R_+,\R^{\Bbb N}_+)$, where $\R_+^{\Bbb N}$ is endowed with the metric $\big((x_n),(y_n)\big)\mapsto \sum_{n=1}^\infty \frac{|x_n-y_n|\wedge 1}{2^n}$, and (2)
each $\mathfrak a_j(X)$ takes values in $C(\R_+,\R_+)$ almost surely.

To see (1), we notice that $\lambda \mapsto (\mathfrak a_1(\lambda),\mathfrak a_2(\lambda),\cdots):\mathscr M_f(S)\to \R_+^{\Bbb N}$ is continuous by \cite[Lemma~2.5]{EK:AT}. 
Hence, by \cite[Proposition~3.6.5 or Exercise 3.13]{EK:MP}, the almost-sure convergence of $X^{(n)}$ to $X$ in $D\big(\R_+,\ms P(S)\big)$ implies (\ref{atom:conv}). For (2), the continuity of $\mathfrak a_j(X)$ follows from \cite[Lemma~2.5]{EK:AT} and the fact that $X$ takes values in $C\big(\R_+,\ms P(S)\big)$ almost surely (see the beginning of the present proof). 
\end{claim}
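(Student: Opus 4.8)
The plan is to reduce both parts of the claim to a single structural input: that the atom-size map
\[
\mathfrak a:(\ms M_f(S),\rho_a)\to \R_+^{\Bbb N},\qquad \lambda\mapsto (\mathfrak a_1(\lambda),\mathfrak a_2(\lambda),\dots),
\]
is continuous when the target carries the bounded product metric displayed in the claim. This is exactly the content of \cite[Lemma~2.5]{EK:AT}, which I would take as the crucial given; the entire point of working with the weak atomic topology $\rho_a$ rather than the Prohorov topology $\rho$ is to make this map continuous, since under $\rho$ alone atom masses can be lost in the limit and $\mathfrak a$ fails to be continuous. I would record at the outset the pointwise-in-$t$ statement $\rho_a(X^{(n)}_t,X_t)\to 0$ already in \eqref{rhoa-conv}, the full almost-sure $J_1$-convergence $X^{(n)}\to X$ in $D(\R_+,\ms P(S))$ secured by the Skorokhod representation at the start of the proof, and the fact from Theorem~\ref{thmm:2} that $X$ has sample paths in $C(\R_+,\ms P(S))$ when $\ms P(S)$ carries $\rho_a$ (recall $X^{(n)},X$ take values in $\ms P(S)\subseteq\ms M_f(S)$).

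For part (1), I would invoke the continuous-mapping principle for Skorokhod space. Since $X^{(n)}\to X$ almost surely in $D(\R_+,\ms P(S))$ and $\mathfrak a$ is continuous, composition with $\mathfrak a$ transports this to $D(\R_+,\R_+^{\Bbb N})$. Concretely, $J_1$-convergence supplies time-changes $\lambda_n\to\mathrm{id}$ locally uniformly with $X^{(n)}\circ\lambda_n\to X$ locally uniformly; these locally uniform limits have precompact range, on which the continuous $\mathfrak a$ is uniformly continuous, so $\mathfrak a(X^{(n)})\circ\lambda_n=\mathfrak a(X^{(n)}\circ\lambda_n)\to\mathfrak a(X)$ locally uniformly, giving $J_1$-convergence of $\mathfrak a(X^{(n)})$ to $\mathfrak a(X)$. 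This is precisely the composition statement \cite[Proposition~3.6.5]{EK:MP}, which I would cite; the continuous paths of the limit $X$ make its hypotheses immediate.

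For part (2), continuity of each coordinate path $t\mapsto\mathfrak a_j(X_t)$ follows by composing the continuous path $t\mapsto X_t\in(\ms P(S),\rho_a)$ from Theorem~\ref{thmm:2} with the continuous coordinate map $\lambda\mapsto\mathfrak a_j(\lambda)$, again \cite[Lemma~2.5]{EK:AT}; a composition of continuous maps is continuous, so $\mathfrak a_j(X)\in C(\R_+,\R_+)$ almost surely.

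The main obstacle here is not analytic depth but ensuring the transfer of convergence lands in the Skorokhod topology rather than merely holding pointwise in $t$: one must use that $\mathfrak a$ is genuinely continuous (not just Borel) in the $\rho_a$ topology, together with the continuous-path property of $X$, so that the very time-changes witnessing $J_1$-convergence of $X^{(n)}$ also witness it for $\mathfrak a(X^{(n)})$. This is exactly where the earlier investment in weak atomic convergence (Theorem~\ref{thmm:2}) pays off, since with only Prohorov convergence part (1) would be false. Everything else — the boundedness of the product metric on $\R_+^{\Bbb N}$ and the precompactness of the relevant ranges — is automatic once the continuity of $\mathfrak a$ under $\rho_a$ is granted.
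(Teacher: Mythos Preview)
Your proposal is correct and follows essentially the same approach as the paper: both reduce to the continuity of $\mathfrak a$ under $\rho_a$ via \cite[Lemma~2.5]{EK:AT}, then invoke \cite[Proposition~3.6.5]{EK:MP} for part (1) and compose with the $C(\R_+,\ms P(S))$-valued paths of $X$ for part (2). Your additional remarks on time-changes and uniform continuity on precompact ranges simply spell out what the cited proposition encapsulates.
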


We are ready to prove Theorem~\ref{thmm:3}. First we note that by the continuity of $s\mapsto \mathfrak a_j(X_s)$ in Claim~\ref{c:2}, $s\mapsto \mathcal S_{L'}^f(X_s)$ is continuous on $[t,\infty)$ for any $L'\in \Bbb N\cup \{\infty\}$, which proves the first assertion of the theorem in particular. For the second assertion, we consider the following inequality: for every $t,T,\delta,L>0$ with $t<T$,
\begin{linenomath}
\begin{align}
&\P\left(\sup_{s\in [t,T]}\left|\mathcal S^f_\infty(X^{(n)}_s)-\mathcal S^f_\infty(X_s)\right|>\delta\right)\notag\\
\begin{split}\label{ineq:at}
\leq
&\P\left(\sup_{s\in [t,T]}|\mathcal S_L^f(X^{(n)}_s)-\mathcal S^f_L(X_s)|>\delta\right)
 +\P(\mathfrak N(X^{(n)}_t)>L)\\
 &+\P\left(\mathfrak N(X_t)>L\right),
\end{split}
\end{align}
\end{linenomath}
which follows from the fact that $s\mapsto \mathfrak N(X^{(n)}_s)$ and $s\mapsto \mathfrak N(X^{(n)}_s)$ are finite and decreasing on $[t,\infty)$ almost surely by Claim~\ref{c:1}, (\ref{atom:conv}) in Claim~\ref{c:2} and the assumption that mutation is absent. To handle the first term on the right-hand side of (\ref{ineq:at}), we use both (1) and (2) in Claim~\ref{c:2}
and standard properties of convergence of c\`adl\`ag functions to continuous functions (cf. \cite[Section 3.10]{EK:MP}) to get
\[
\limsup_{n\to\infty}\P\left(\sup_{s\in [t,T]}|\mathcal S_L^f(X^{(n)}_s)-\mathcal S_L^f(X_s)|>\delta\right)=0.
\]
Claim~\ref{c:1} is able to handle the other two terms on the right-hand side of (\ref{ineq:at}). Hence, by (\ref{ineq:at}), we obtain
\[
\lim_{n\to\infty}\P\left(\sup_{s\in [t,T]}\left|\mathcal S^f_\infty(X^{(n)}_s)-\mathcal S_\infty^f(X_s)\right|>\delta\right)=0,
\]
which is enough to obtain the required convergence (cf. \cite[Section 3.10]{EK:MP}). We have proved our assertion for the atom-size point processes for the voter models under consideration. The proof is complete.
\end{proof}

\section{Duality}\label{sec:dual}
In this section, we discuss the duality between multi-type voter models with mutation and coalescing Markov chains and prove Proposition~\ref{prop:dual}. A similar treatment of the duality can be found in, for example,  \cite{GM:NVM} for noisy voter models.

\subsection{Graphical representation}
First let $S^x_k$ and $\{U^x_1<U^x_2<\cdots\}$, for $x\in E$ and $k\geq 1$, be independent such that $S^x_k$ are $E$-valued with distribution $\P(S^x_k=y)\equiv q(x,y)$, and $U^x_k$'s are the arrival times of a rate-$1$ Poisson process;
these $S^x$ and $U^x$ will be used to describe the $(E,q,0)$-voter model where mutation is absent. To incorporate mutation with respect to  $\mu$ satisfying $\mu(\1)>0$, we let $M^x_k$ and $\{V^x_1<V^x_2<\cdots\}$, for $x\in E$ and $k\geq 1$, be independent such that $M^x_k$ are i.i.d. $S$-valued with law $\overline{\mu}$, where $\overline{\mu}=\mu/\mu(\1)$ as before, and $V^x_k$'s are the arrival times of a rate-$\mu(\1)$-Poisson process. Moreover, $S^x,U^x,M^x,V^x$ are independent.

Now, given an initial condition $\xi\in S^E$, we have a version of a $(E,q,\mu)$-voter model which is defined as a pure-jump process with updating times $U_k^x$'s and $V_k^x$'s. At the times $U_k^x$, we draw an arrow at the space-time point $(x,U^x_k)$ pointing to the space-time point $(S^x_k,U^x_k)$, which means informally that site $x$ adopts the type at site $S^x_k$, and set $\xi_t(x)=\xi_{t-}(S^x_k)$. At the mutation times $t=V^x_k$, we set $\xi_t(x)=M^x_k$.

To establish the duality, at any fixed time $t>0$, first we reverse time and identify a family of $q$-coalescing Markov chains $\{X^{x,t};x\in E\}$ which keep track of the genealogy of type propagation of the $(E,q,0)$-voter model without mutation.
We set $X^{x,t}_0\equiv x$ and then let $X^{x,t}_s$ trace out a path going backward in time down to the starting time of the voter model, following the arrows defined in the previous paragraph. Here and below, ``backward'' is in
terms of the time progression of the voter model. More precisely, if $\{U^x_k;k\in \Bbb N\}\cap (0,t)=\varnothing$, we put $X^{x,t}_s=x$ for $s\in [0,t]$; otherwise, we single out the last update defined by $(S^x,U^x)$ for the voter model without mutation by choosing $k_0=\max\{k\geq 1;U^x_k<t\}$, and set
\[
X^{x,t}_s=x\quad\mbox{for $s\in \big(0,t-U^x_{k_0}\big)$}\quad  \mbox{and}\quad\mbox{$X^{x,t}_{t-U^x_{k_0}}=S^x_{k_0}$}.
\]
We repeat this construction starting at $\big(X^{x,t}_{t-U^x_{k_0}},t-U^x_{k_0}\big)$ going backward in time, thus defining $X^{x,t}_s$ for  $s$ up to $t$ by an induction on the time intervals $[U^x_{k_0-1},U^x_{k_0}]$, ..., $[0,U^x_1]$. It should be plain that for fixed $t$, $X^{x,t}$ are $q$-Markov chains by the reversibility of Poisson processes, and their coalescence after pairwise meeting follows from the use of the arrows.

The duality between the $(E,q,\mu)$-voter model and the $q$-coalescing Markov chains $\{X^{x,t};x\in E,t\geq 0\}$ can be described as follows. We introduce $\Pi_t=\{(y,V^y_k);y\in E,V^y_k\leq t\}$, which consists of all space-time points where mutation events occur up to time $t$. Then we consider mutation events in terms of the type propagation genealogy of the $(E,q,0)$-voter model {\it without} mutation up to time $t$, or equivalently, in terms of the union of space-time trajectories of $X^{x,t}$ for $x\in E$. 
If the chain $X^{x,t}$ does not encounter a mutation event in $\Pi_t$ throughout its trajectory in the sense that $(X^{x,t}_s,t-s)\notin \Pi_t$ for all $s\in [0,t]$, we put $e(x,t)=\infty$. Otherwise, we consider the first mutation event on the trajectory of $X^{x,t}$. It corresponds to the last mutation event along the unique space-time ``ancestral line'' of type propagation under the voter-model dynamics without mutation, which leads to the type at site $x$ and time $t$, and finalizes the type being transported to the destination $(x,t)$. Therefore we choose $y $ and $k$ satisfying $X^{x,t}_{t-V^y_k}=y$ and $(X^{x,t}_s,t-s)\notin \Pi_t$ for $s<t-V^y_k$. Set $e(x,t)=t-V^y_k$ and $M(x,t)=M^y_k$ which give the time spent by $X^{x,t}$ before finding that mutation event and the type of the associated mutant, respectively. Then we see that the $(E,q,\mu)$-voter model with initial condition $\xi$ defined above by $S^x,U^x,M^x,V^x$ satisfies the equation
\begin{linenomath}
\begin{align}\label{prob:dual}
\xi_t(x)=M(x,t)\1_{\{e(x,t)\leq t\}}+\xi\big(X^{x,t}_t\big)\1_{\{e(x,t)>t\}}\quad\forall\;x\in E
\end{align}
\end{linenomath}
almost surely for each fixed $t$, which gives the required duality.

\subsection{Proof of Proposition~\ref{prop:dual}}
To see how (\ref{f:bdd}) follows from (\ref{prob:dual}) , we work with the partition $\{A_j\}_{1\leq j\leq 4}$ defined by 
 \begin{linenomath}
\begin{align*}
A_1=&\{e(x,t)\wedge e(y,t)>t\}, \\
A_2=&\{M_{x,y}<e(x,t)\wedge e(y,t)\leq t\},\\
A_3=&\{e(x,t)\wedge e(y,t)\leq M_{x,y}\leq t\},\\ A_4=&\{e(x,t)\wedge e(y,t)\leq t<M_{x,y}\}. 
\end{align*}
 \end{linenomath}
Then we consider the differences 
\begin{linenomath}
\begin{align}\label{diff:f}
&\E\left[f\big(\xi_t(x),\xi_t(y)\big);A_j\right]-\E\left[f\big(\xi(X^{x,t}_t),\xi(X^{y,t}_t)\big);A_j\right]
\end{align}
\end{linenomath}
for $1\leq j\leq 4$. For $j=1$, there is no mutation throughout the trajectories of $X^{x,t}$ and $X^{y,t}$, and so $\xi_t(x)=\xi(X^{x,t}_t)$ and $\xi_t(y)=\xi(X^{y,t}_t)$.  For $j=2$, the two terms in the above display are both zero,
since the two chains $X^{x,t}$ and $X^{y,t}$ coalesce before the first mutation events on their trajectories, which are the same as a result. In other words, for $j=1,2$, the above difference is zero.
For $j=3$, we write $\mathbf e_1$ and $\mathbf e_2$ for two independent exponential variables with mean one, and obtain 
\begin{linenomath}
\begin{align*}
&\left|\E\left[f\big(\xi_t(x),\xi_t(y)\big);A_3\right]-\E\left[f\big(\xi(X^{x,t}_t),\xi(X^{y,t}_t)\big);A_3\right]\right|\\
&\hspace{1cm}\leq \P(e(x,t)\wedge e(y,t)<M_{x,y}<t)\\
&\hspace{1cm}\leq \int_0^t \P(\mathbf e_1\wedge \mathbf e_2\leq s)\P(M_{x,y}\in ds)
\leq 2\mu(\1)\int_0^t \P(M_{x,y}>s)ds,
\end{align*}
\end{linenomath}
where the second inequality follows from the independence $\{X^{z,t};z\in E\}\ind \{V^z;z\in E\}$ and the reversibility of Poisson processes. For $j=4$, the same reason applies and we get
\[
\left|\E\left[f\big(\xi_t(x),\xi_t(y)\big);A_4\right]-\E\left[f\big(\xi(X^{x,t}_t),\xi(X^{y,t}_t)\big);A_4\right]\right|\leq (1-e^{-2\mu(\1)t})\P(M_{x,y}>t).
\]

The required inequality (\ref{f:bdd}) follows by putting together these observations for the terms in (\ref{diff:f}) for $1\leq j\leq 4$.

\end{document}